\tikzset{
  wavy/.style={decorate, decoration={snake, amplitude=0.7mm, segment length=2.2mm}},
}
\newtheorem{theorem}{Theorem}[section]
\newtheorem{prop}[theorem]{Proposition}
\newtheorem{lemma}[theorem]{Lemma}
\newtheorem{claim}[theorem]{Claim}
\newtheorem{fact}[theorem]{Fact}
\theoremstyle{definition}
\newtheorem{defn}[theorem]{Definition}
\newtheorem*{defn-non}{Definition}
\newtheorem{ques}[theorem]{Question}
\newlist{Case}{enumerate}{2}
\setlist[Case, 1]{%
    label           =   {\bfseries Case \arabic*.},
    labelindent=1em ,labelwidth=1.3cm, labelsep*=1em, leftmargin =!
}
\setlist[Case, 2]{%
    label           =   {\bfseries Subcase \arabic{Casei}.\arabic*.},
    labelindent=-1em ,labelwidth=1.3cm, labelsep*=1em, leftmargin =!
}
\newenvironment{poc}{\begin{proof}[Proof of claim]}{\end{proof}}
\newcommand{\ceil}[1]{\lceil #1\rceil}
\newcommand{\floor}[1]{\lfloor #1\rfloor}
\title{A Tverberg-type problem of Kalai: Two negative answers to questions of Alon and Smorodinsky, and the power of disjointness}
\author{
Wenchong Chen\thanks{School of Mathematical Sciences and LPMC, Nankai University, Tianjin, China. Emails: 2212161@mail.nankai.edu.cn and wangzhou@nankai.edu.cn. Zhouningxin Wang is supported by National Natural Science Foundation of China under Grant 12301444.}
\and
Gennian Ge\thanks{School of Mathematical Sciences, Capital Normal University, Beijing 100048, China. Email: gnge@zju.edu.cn. Gennian Ge is supported by the National Key Research and Development Program of China under Grant 2025YFC3409900, the National Natural Science Foundation of China under Grant 12231014, and Beijing Scholars Program.}
\and 
Yang Shu\thanks{School of Mathematical Sciences, University of Science and Technology of China, Hefei, 230026, China. Email: shuyangyyyy@mail.ustc.edu.cn.}
\and
Zhouningxin Wang\footnotemark[1]
\and
Zixiang Xu\thanks{Extremal Combinatorics and Probability Group (ECOPRO), Institute for Basic Science (IBS), Daejeon, South Korea. Email: zixiangxu@ibs.re.kr. Zixiang Xu is supported by the Institute for Basic Science (IBS-R029-C4).}
}
\date{}
\begin{document}
\maketitle

\begin{abstract}
Let \(f_r(d,s_1,\ldots,s_r)\) denote the least integer \(n\) such that every \(n\)-point set \(P\subseteq\mathbb{R}^d\) admits a partition \(P=P_1\sqcup\cdots\sqcup P_r\) with the property that for any choice of \(s_i\)-convex sets \(C_i\supseteq P_i\) \((i\in[r])\) one necessarily has \(\bigcap_{i=1}^r C_i\neq\emptyset\), where an \(s_i\)-convex set means a union of \(s_i\) convex sets. A recent breakthrough by Alon and Smorodinsky establishes a general upper bound \[ f_r(d,s_1,\dots,s_r) =O\Big(dr^2\log r\cdot \Big(\prod_{i=1}^r s_i\Big)\cdot \log\Big(\prod_{i=1}^r s_i\Big)\Big). \] Specializing to \(r=2\) resolves the problem of Kalai from the 1970s. They further singled out two particularly intriguing questions: whether \(f_{2}(2,s,s)\) can be improved from \(O(s^2\log s)\) to \(O(s)\), and whether there is a polynomial upper bound \(f_r(d,s,\ldots,s)\le \operatorname{Poly}(r,d,s)\).
We answer both in the negative by showing the exponential lower bound 
\[
f_{r}(d,s,\ldots,s)> s^{r}
\]
for any \(r\ge 2, s\ge 1\) and \(d\ge 2r-2\), which matches the upper bound up to a multiplicative \(\log{s}\) factor for sufficiently large \(s\). Our construction combines a scalloped planar configuration with a direct product of regular \(s\)-gon on the high-dimensional torus \((\mathbb{S}^1)^{r-2}\).

Perhaps surprisingly, if we additionally require that within each block the \(s_i\) convex sets are pairwise disjoint, the picture changes markedly. Let \(F_r(d,s_1,\ldots,s_r)\) denote this disjoint-union variant of the extremal function. 
\begin{itemize}
    \item We show that \(F_{2}(2,s,s)=O(s\log s)\) by connecting it to a suitable line-separating function in the plane.
 
    \item We show when \(s\) is large, \(F_r(d,s,\ldots,s)\) can be bounded by \(O_{r,d}\bigg(s^{\big(1-\frac{1}{2^{d}(d+1)}\big)r+1}\bigg)\) and \(O_{d}(r^{3}\log{r}\cdot s^{2d+3})\) respectively. This builds on a novel connection between the geometric obstruction and hypergraph Tur\'{a}n numbers, in particular, a variant of the Erd\H{o}s box problem.
\end{itemize}
\end{abstract}

\section{Introduction}
\subsection{Background}
Tverberg-type intersection phenomena lies at the heart of combinatorial convexity: one seeks structural conditions under which different organized parts of a finite set of points must meet. The classical starting point is \emph{Radon’s theorem}~\cite{1921Radon}, which states that every set of \(d+2\) points in \(\mathbb{R}^d\) can be split into two parts whose convex hulls intersect; the bound is tight. A far-reaching generalization is \emph{Tverberg’s theorem}~\cite{1966Tverberg}: for integers \(d\ge1\) and \(r\ge2\), any set of \((r-1)(d+1)+1\) points in \(\mathbb{R}^d\) admits a partition into \(r\) parts whose convex hulls have a common point. This threshold is also sharp. Tverberg’s result has spawned a vast literature, including colorful, fractional, and topological extensions~\cite{1982DMColorful,1990comb,1992jlms,1994jlms,2014BLMS,2015JEMS,1996JCTB,2003BUThm,2012DCG,1992JCTA}, with broad connections across various fields~\cite{2009ACMTrans,2020SIAMData,2022BAMSSurvey}; see the surveys~\cite{2018BAMSSurvey,2018Handbook} and references therein.

Motivated by Radon’s viewpoint, Kalai in the 1970s~\cite{2025KalaiBolg} advocated a different axis of generalization: instead of requiring each part to contribute a single convex hull, allow each part to be a \emph{union of few convex sets}. This leads to Radon and Tverberg-type questions for unions of convex sets, asking for thresholds that force two or more such unions to meet. The significance of this extension lies in its practical relevance, as many real-world objects are not perfectly convex but can be approximated by unions of convex components.

In this vein, Alon and Smorodinsky made a recent breakthrough using an extended VC-dimension argument tailored to unions of halfspaces: among other results, they solved the two-part (``Radon-type’’) problem for unions of few convex sets and developed an \(r\)-part framework in \(\mathbb{R}^d\) \cite{2025arxivAlonSmo}. After this, Kalai~\cite{2025KalaiBolg} stated that this now looks to him like a very promising direction in discrete geometry.

To systematize the \(r\)-part problems for unions of convex sets, we adopt the following master parameter. For integers \(r\ge 2\) and \(s_1,\dots,s_r\ge 1\), define \(f_r(d,s_1,\ldots,s_r)\) to be the least integer \(n\) such that every \(n\)-point set \(P\subseteq\mathbb{R}^d\) admits a partition \(P=P_1\sqcup\cdots\sqcup P_r\) with the property that for any choice of \(s_i\)-convex sets \(C_i\supseteq P_i\) \((i\in[r])\) one necessarily has \( \bigcap_{i=1}^r C_i\neq\emptyset ,\)
where an \(s_i\)-convex set is a union of \(s_i\) convex sets. Within this framework, Radon's theorem~\cite{1921Radon} is equivalent to \(f_{2}(d,1,1)=d+2\) and Tverberg's theorem~\cite{1966Tverberg} is equivalent to \(f_{r}(d,1,\ldots,1)=(r-1)(d+1)+1.\) This quantity encapsulates the Radon case and its Tverberg-type extensions for unions of convex sets, and it will be the central object of research in this paper.

\subsection{The results and questions of Alon and Smorodinsky}
The earliest finiteness result toward Kalai’s problem is due to B\'{a}r\'{a}ny and Kalai, who showed via a clever Ramsey-theoretic argument that \(f_{2}(d,s_{1},s_{2})\) is always finite, albeit with an enormous bound as a function of \(d,s_{1},s_{2}\), which relies on another quantitative result of Conlon, Fox, Pach, Sudakov and Suk~\cite{2014TRansAMS}, see \cite{2017Kalai}.

A recent breakthrough of Alon and Smorodinsky~\cite{2025arxivAlonSmo} provides the currently best general upper bound for the Tverberg-type intersection problem in the context of unions of convex sets.

\begin{theorem}[\cite{2025arxivAlonSmo}]
\label{thm:AS-general}
For integers \(d\ge 1\), \(r\ge 2\), and \(s_1,\dots,s_r\ge 1\), there exists a constant \(c>0\) such that
\[
f_r(d,s_1,\dots,s_r)
\le
cdr^{2}\log r\Big(\prod_{i=1}^{r}s_i\Big)\log\Big(\prod_{i=1}^{r}s_i\Big)
\Bigr..
\]
\end{theorem}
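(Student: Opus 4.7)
My plan follows the VC-dimension strategy that Alon and Smorodinsky develop. First, I reduce the question to a cleaner combinatorial reformulation: an $s_i$-convex superset $C_i \supseteq P_i$ can be assumed of the form $\bigcup_{j=1}^{s_i} \mathrm{conv}(Q_{i,j})$, where $P_i = Q_{i,1} \sqcup \cdots \sqcup Q_{i,s_i}$ is an auxiliary sub-partition, since shrinking each convex piece to the convex hull of the captured points can only decrease the intersection $\bigcap_i C_i$. Hence the required condition becomes: for every refinement of the master partition $(P_i)_{i=1}^r$ into $s_i$ sub-parts per block, there exist indices $(j_1,\ldots,j_r)$ with $\bigcap_{i=1}^r \mathrm{conv}(Q_{i,j_i}) \neq \emptyset$. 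Call a master partition \emph{bad} if some refinement admits no such index tuple; the goal is to exhibit a non-bad partition once $n$ exceeds the claimed threshold.

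Second, I translate bad refinements into halfspace certificates. By convex separation and Carath\'eodory's theorem, each individual failure $\bigcap_i \mathrm{conv}(Q_{i,j_i}) = \emptyset$ is witnessed by $O(d)$ halfspaces whose defining hyperplanes pass through points of $P$, and collecting witnesses across the $\prod_i s_i$ index tuples of a refinement assembles a global certificate. A standard estimate bounds the VC-dimension of $s$-fold unions of halfspaces in $\mathbb{R}^d$ by $O(ds \log s)$, and via the Sauer--Shelah lemma the number of combinatorially distinct certificates realizable on the ground set $P$ is polynomial in $n$ with exponent of order $d \cdot (\prod_i s_i) \cdot \log(\prod_i s_i)$.

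Third, I run a first-moment argument. Assigning each point of $P$ an independent uniform colour in $[r]$, the probability that the resulting random partition is compatible with a fixed bad certificate decays exponentially in $n$, with rate $\Theta((\log r)/r)$ once one tracks the typical block size $\approx n/r$. Requiring the expected count of compatible bad certificates to stay strictly below $1$ and solving for $n$ yields
\[
n \;\ge\; c\, d\, r^{2} \log r \cdot \Big(\prod_{i=1}^r s_i\Big) \log\Big(\prod_{i=1}^r s_i\Big),
\]
whereupon some colouring produces a master partition avoiding every bad refinement.

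The principal obstacle is the tight VC-dimension estimate for $s$-fold unions of halfspaces: the $\log s$ factor there is essentially sharp and is the source of the $\log\prod_i s_i$ overhead in the final bound. A secondary difficulty arises in the probabilistic step --- extracting the outer factor $r^{2}\log r$, rather than a cruder power of $r$, requires carefully tracking the block-size distribution in a random $r$-colouring instead of settling for a uniform per-point bound.
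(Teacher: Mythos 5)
Your first two steps — reducing to covers by convex hulls of sub-parts and controlling certificates through a VC-dimension bound — track the Alon--Smorodinsky route; the relevant range space is actually unions of convex polyhedra with bounded total facet count (the paper's \cref{lem:AS-VC}), not simply unions of halfspaces, but that is a wording issue. The genuine gap is in the third step. Alon and Smorodinsky do not close the argument with a direct first-moment bound over uniformly random $r$-colourings; they introduce the notion of an \emph{$r$-shattered set} and prove a dedicated lemma (\cref{lemma:RSS} in this paper) asserting that any hypergraph of VC-dimension $D$ has no $r$-shattered set larger than $cDr^{2}\log r$. That lemma is precisely where the clean $r^{2}\log r$ factor comes from.

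Your sketched first moment does not reproduce it. For a fixed tuple of traces $(e_1,\dots,e_r)$ with $\bigcap_i e_i=\emptyset$, the probability that a uniformly random $r$-colouring of $P$ has $P_i\subseteq e_i$ for every $i$ equals $\prod_{x\in P}|\{i:x\in e_i\}|/r\le(1-1/r)^n$, since no $x$ lies in all $r$ traces; the decay rate is therefore $\Theta(1/r)$, not the $\Theta((\log r)/r)$ you claim, and it is unclear where an extra $\log r$ in the exponent would come from. Enumerating certificate tuples via Sauer--Shelah gives at most $n^{rD}$ of them, so the union bound forces $n\gtrsim r^{2}D\log n\approx r^{2}D\log(rD)$. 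Because $D=\Theta\bigl(d(\prod_i s_i)\log(\prod_i s_i)\bigr)$ can far exceed $r$, this is strictly weaker than the target $n\approx r^{2}D\log r$: the extra $\log(rD)/\log r$ factor is exactly what the $r$-shattering lemma saves. The refinement you gesture at (``tracking the block-size distribution'') is not spelled out, and I do not see how it closes this gap without effectively reproving the $r$-shattering lemma.
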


Specializing Theorem~\ref{thm:AS-general} to the two-part case (\(r=2\)) yields a near-optimal answer to Kalai’s original question. Beyond their main upper bound, Alon and Smorodinsky~\cite{2025arxivAlonSmo} also record a number of refined estimates in special regimes. For example, in the planar case they determined the exact value \(f_{2}(2,s,1)=2s+2\), while for fixed \(d\ge 4\) they obtained \(f_{2}(d,s,1)=\Theta(ds\log s)\). 
They also gave linear lower bounds in the planar symmetric setting such as \(f_{2}(2,s,s)\ge 4s\), and demonstrated superlinear phenomena in the three-dimensional case \(f_{2}(3,s,s)\ge s^{1+o(1)}\).

Building on these results, they highlighted two particularly intriguing questions.
\begin{ques}[\cite{2025arxivAlonSmo}]\label{question:AS}\
\begin{enumerate}
  \item 
  Is \(f_{2}(2,s,s)\) linear in \(s\)? 
  \item 
 Is \(f_r(d,s,\dots,s)\) upper bounded by a polynomial in \(r\), \(d\), and \(s\)?
\end{enumerate}
\end{ques}

\subsection{Our contributions}\label{subsection:Contribution}
We settle both questions of Alon and Smorodinsky in the negative. In the planar case, we present an elementary yet previously unnoticed construction that already forces quadratic growth. 

\begin{theorem}\label{thm:quadraticLB}
For every integer \(s\ge 1\), \(
  f_{2}(2,s,s) > s^{2}.\)
\end{theorem}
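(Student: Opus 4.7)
The plan is to construct an explicit $s^2$-point planar configuration $P$ such that for every partition $P=P_1\sqcup P_2$, there exist $s$-convex sets $C_1\supseteq P_1$ and $C_2\supseteq P_2$ with $C_1\cap C_2=\emptyset$, thereby establishing $f_2(2,s,s)>s^2$. Since this is a lower-bound construction for a Tverberg-type problem, the main work is geometric: exhibiting the configuration and then giving, for each bipartition, an explicit disjoint $(s,s)$-convex cover.

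Following the hint in the abstract, I would take $P$ to be a scalloped planar arrangement. A natural first attempt is to place $s$ pairwise well-separated clusters of $s$ points each along a horizontal axis, where each cluster lies on a small concave-down arc (a ``$\cap$-shaped'' scallop) inside its own narrow vertical strip $V_k$, with the strips $V_1,\ldots,V_s$ chosen far apart. The strip separation would guarantee that convex sets supported inside distinct strips cannot meet. Given any bipartition, I would then try to process each cluster independently: within each cluster $k$, produce a single convex set $U_k\subseteq V_k$ containing $P_1\cap V_k$ and a single convex set $W_k\subseteq V_k$ containing $P_2\cap V_k$, with $U_k\cap W_k=\emptyset$. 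Summing over $k$ would give $s$-convex covers $C_i=\bigcup_k (\cdot)$, with global disjointness reducing to the local disjointness inside each strip.

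The crux of the proof is the within-cluster local separation. On a scallop of $s$ points in convex position, an adversarial ``alternating'' coloring forces the two color classes' convex hulls to intersect, so a $1$-$1$ disjoint cover by hulls is impossible. I would try to circumvent this by exploiting the concavity of the scallop and the two-dimensional flexibility around it: take one color's convex set as the convex hull of its points (a sub-polygon below the arc), and the other color's set as a carefully chosen convex region extending above the arc (bounded below by, say, a tangent line or chord of the scallop), using the arc itself as a geometric separator between the below-arc and above-arc regions.

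The main obstacle is making this local step work for every $2$-coloring of a scallop: because both color classes lie on the arc, the ``above'' convex region must contain exactly the intended color's points while avoiding the ``below'' hull, which requires delicate choices of point placement along the arc and of the separating line. If the clean $1$-$1$ per-cluster scheme breaks down for some colorings, a fallback is to allow budget sharing across clusters, using more convex sets in clusters with adversarial colorings and compensating with fewer (or none) in monochromatic clusters; verifying that the total budget of $s$ convex sets per color is always sufficient would then reduce to a combinatorial accounting. Pinning down the exact scallop geometry and the matching cover construction so that every bipartition is handled within the budget is the key technical step of the proof.
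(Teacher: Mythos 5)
Your proposal identifies the right kind of construction (a scalloped arrangement of $s$ clusters of $s$ points), but the covering scheme you sketch has a genuine gap that the paper avoids in a crucially different way. You propose handling each cluster locally: for each strip $V_k$, find a single convex $U_k\subseteq V_k$ covering $P_1\cap V_k$ and a single convex $W_k\subseteq V_k$ covering $P_2\cap V_k$ with $U_k\cap W_k=\emptyset$. As you yourself note, this fails for alternating colourings, and the ``above the arc / below the arc'' circumvention you suggest does not rescue it: the points of \emph{both} colours lie on the concave-down arc, and the region strictly above a concave-down arc is not convex (the hypograph of a concave function is convex, the epigraph is not). Hence any convex set containing two non-adjacent arc points of one colour must contain the chord between them, which dips below the arc and into the other colour's hull. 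Concretely, with five points on a semicircle coloured alternately, the triangle spanned by the three ``blue'' points and the segment spanned by the two ``red'' points already intersect, and no convex ``above-the-arc'' region containing the blue arc points can avoid this. The ``budget sharing'' fallback is worse: an alternating colouring in a single cluster already needs $\Theta(s)$ convex pieces for one colour if you insist on staying inside the strip, so summed over $s$ clusters you blow the budget quadratically.

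The paper sidesteps this entirely with a \emph{row/column} decomposition that is not local to any cluster. It arranges the $s$ arcs radially on the boundaries of $s$ congruent large disks facing a common centre. One colour is covered by the $s$ \emph{row} hulls, one per disk; since each row hull is the convex hull of points on a single minor arc, it lies inside that disk. The other colour is covered by the $s$ \emph{column} hulls, each joining the points of one fixed angular index across all $s$ disks; by choosing the disks large and the arcs short, the tangent (supporting) halfspace at each arc point forces every column hull to avoid the interiors of \emph{all} the disks. Disjointness then follows from the row hulls living inside the disks and the column hulls living outside them. The key idea you are missing is precisely this asymmetry: one colour's convex pieces are confined to clusters, the other colour's pieces deliberately cross between clusters, and it is the global radial geometry (large circles, tiny arcs) that makes the crossing pieces miss every disk interior.
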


Beyond the plane, we show that the same obstruction persists in higher dimensions and for larger numbers of parts. The argument lifts the planar gadget through a careful gluing scheme that arranges many low-dimensional carriers in convex position, yielding the following bound.

\begin{theorem}\label{thm:highdLB}
For all integers \(s\ge 1\), \(r\ge 2\) and \(d\ge 2r-2\),
\[
  f_{r}(d,s,\ldots,s) > s^{r}.
\]
\end{theorem}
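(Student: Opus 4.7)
My plan is to build an explicit Cartesian product gadget $X_r \subseteq \mathbb{R}^{2r-2}$ with $|X_r| = s^r$ and then verify that every $r$-partition of $X_r$ admits $s$-convex covers with empty intersection. I would take $X_r = X_2 \times V(Q_1) \times \cdots \times V(Q_{r-2})$, where $X_2 \subseteq \mathbb{R}^2$ is the planar scalloped configuration realizing Theorem~\ref{thm:quadraticLB} and each $Q_i$ is a regular $s$-gon in its own copy of $\mathbb{R}^2$. The $s^{r-2}$ vertices of the product $V(Q_1) \times \cdots \times V(Q_{r-2})$ sit in convex position in $\mathbb{R}^{2(r-2)}$, naturally parametrized by the torus $(\mathbb{S}^1)^{r-2}$; combined with the $s^2$ points of $X_2$ this yields $s^r$ points in dimension $2 + 2(r-2) = 2r - 2$.

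The first step is a sector separation lemma for each regular $s$-gon $Q_i$: its $s$ (open) Voronoi cells partition the $i$-th copy of $\mathbb{R}^2$ into pairwise disjoint convex regions, each containing a unique vertex $v^{(i)}_j$. Lifting them trivially in the other factors gives $s$ pairwise disjoint convex slabs $S^{(i)}_1, \ldots, S^{(i)}_s \subseteq \mathbb{R}^{2r-2}$, distinguishing the $s$ levels of the $i$-th factor. Equivalently, centering $Q_i$ at the origin, the halfplanes $h^{(i)}_j = \{x : \langle x, v^{(i)}_j \rangle > \cos(\pi/s)\}$ isolate the vertices, and the identity $\sum_j v^{(i)}_j = 0$ forces $\bigcap_j h^{(i)}_j = \emptyset$, which will be a useful fallback. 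The second ingredient is Theorem~\ref{thm:quadraticLB}, which supplies disjoint $s$-convex planar covers for every $2$-partition of $X_2$; these lift trivially along the planar factor.

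Given an $r$-partition $A_1 \sqcup \cdots \sqcup A_r$, I would route $r - 2$ of the parts through the $r - 2$ $s$-gon factors and the remaining two through the planar factor. When $A_i$ is routed through $Q_k$, set $C_i = \bigcup_{j \in T_i^{(k)}} S^{(k)}_j$ where $T_i^{(k)} \subseteq [s]$ records the active $k$-th coordinates of $A_i$; this is $|T_i^{(k)}|$-convex (hence $s$-convex) and contains $A_i$. For the two planar-routed parts, Theorem~\ref{thm:quadraticLB} applied to the induced $2$-partition of $X_2$ produces disjoint $s$-convex planar covers $Y_{r-1}, Y_r$, which are lifted to $\mathbb{R}^{2r-2}$. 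A direct computation shows that $\bigcap_i C_i$ factorizes through $Y_{r-1} \cap Y_r = \emptyset$, and is therefore empty.

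The main obstacle is the combinatorial routing. Theorem~\ref{thm:quadraticLB} requires the two planar-routed parts to have disjoint $X_2$-projections, and this need not hold for an arbitrary pairing. When no direct pairing works, I plan to iteratively peel off a problematic part through an $s$-gon factor, sacrificing one Voronoi slab and exploiting the identity $\bigcap_j h^{(i)}_j = \emptyset$ (together with the convex position of the $s^{r-2}$ torus slices) to reduce to a smaller, routable instance. Proving that this iterative scheme always terminates with an admissible routing is the subtlest step and is precisely where the interplay between the scalloped planar gadget, the $s$-gon halfplane identity, and the torus convex position becomes essential.
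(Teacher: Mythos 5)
Your construction $X_r = X_2 \times V(Q_1) \times \cdots \times V(Q_{r-2}) \subseteq \mathbb{R}^{2r-2}$ is exactly the one in the paper, so the point set is right. The verification, however, is where the proposal departs from the paper's argument and where it has a genuine gap.

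You identify the obstruction yourself: for a generic $r$-partition, no two parts need have disjoint $X_2$-projections, so there may be no valid way to route two parts through the planar factor and then invoke Theorem~\ref{thm:quadraticLB}. The proposed repair, iteratively peeling a conflicting part off through an $s$-gon factor, is the entire substance of the proof and is not spelled out. Moreover, there are partitions for which no reassignment of parts to factors resolves the conflict at all: if every fiber $\{\bm{z}\}\times V(Q_1)\times\cdots\times V(Q_{r-2})$ meets all $r$ parts (easy to arrange once $s^{r-2}\ge r$), then every pair of parts has $X_2$-projection equal to all of $X_2$. The halfplane identity $\bigcap_j h^{(i)}_j=\emptyset$ does not rescue this either, because your covers $C_i$ are \emph{unions} of Voronoi slabs, not intersections of halfplanes, and a union of Voronoi cells for any nonempty index set is nonempty. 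Without a concrete termination argument for the peeling, the proposal does not close.

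The paper avoids routing entirely. For each part $P_j$, the cover is built from the $j$-th index of a point's own $r$-tuple $(i_1,\ldots,i_r)$: the $k$-th convex piece of $C_j$ is $\operatorname{Conv}\{\bm{p}\in P_j: i_j=k\}$. Thus parts $1$ and $2$ are sliced by the planar row and column indices, and parts $3,\ldots,r$ by the torus vertex indices. The crucial step is an extreme-point reduction: any point $\bm{v}$ in a hypothetical intersection $\bigcap_{j} C_{j,k_j}$ must have torus component equal to the vertex $(\bm{u}_{k_3},\ldots,\bm{u}_{k_r})$, and because this vertex is extremal in $U^{r-2}$ (Fact~\ref{fact:COnvexPosition}), the convex-combination witnesses for $\bm{v}\in C_{1,k_1}$ and $\bm{v}\in C_{2,k_2}$ are forced to live in the single fiber over that vertex. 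The planar component of $\bm{v}$ then sits simultaneously in a row-hull from $P_1$ and a column-hull from $P_2$ of (a sub-bipartition of) a copy of $X_2$, contradicting Lemma~\ref{lemma:KeyInConstruction}. This works uniformly for every partition and every index tuple, with no case analysis or combinatorial routing. I would encourage you to replace the routing idea with this index-based covering scheme and the extreme-point argument; everything else in your setup then goes through.
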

For large \(s\), this lower bound matches the general upper bound of Alon and Smorodinsky up to a multiplicative factor of \(\log s\). A refined version actually yields a slightly stronger bound \(f_2(d,s,s) > ds^2\), and correspondingly \(f_r(d,s,\ldots,s)>(d-2r+4)\cdot s^r\) for \(d\ge2r-2\), which indicates the upper bound \(O_{r}(ds^{r}\log{s})\) is very close to optimal when both of \(s\) and \(d\) are large enough. Since the proof of \(f_r(d,s,\ldots,s)>(d-2r+4)\cdot s^r\) might become considerably longer and obscure the elegance of the core idea, to preserve the simplicity of the argument, we will present this refined construction  in~\cref{sec:remarks}.

A subtle but consequential modeling choice lies in whether the $s_i$ convex pieces inside each union are allowed to overlap. While the definition of $f_r(d,s_1,\ldots,s_r)$ does not impose such a restriction, we discover that enforcing pairwise disjointness within each block fundamentally changes the quantitative landscape. 
To formalize this restricted setting, we introduce the following variant.

\begin{defn}\label{def:Fr}
For integers $d\ge1$, $r\ge2$, and $s_1,\ldots,s_r\ge1$, let $F_r(d,s_1,\ldots,s_r)$ be the least integer $n$ such that every $n$-point set $P\subseteq\mathbb{R}^d$ admits a partition
$P=P_1\sqcup\cdots\sqcup P_r$ with the following property: If for each \(i\in [r]\), \(C_{i}\) consists of \(s_{i}\) convex sets \(C_{i,j}\) (\(j\in [s_i]\)) such that $C_{i,j}\cap C_{i,j'}=\emptyset$ for all $j\ne j'$ (that is, \(C_{i}=\bigsqcup_{j=1}^{s_{i}}C_{i,j}\)) and \(P_{i}\subseteq C_{i}\), then $\bigcap_{i=1}^{r} C_i\neq\emptyset$ must hold. 
\end{defn}
By the definitions of two functions, we can see the disjoint-union requirement restricts the admissible containers, hence, for all parameters,
\begin{equation}\label{equation:RelationFf}
  F_r(d,s_1,\ldots,s_r) \le f_r(d,s_1,\ldots,s_r).
\end{equation}
What is perhaps very striking is that, unlike the unrestricted model (Theorem~\ref{thm:quadraticLB}), the disjoint-union model undergoes a near-linear regime already in the planar two-part case. This phenomenon aligns with the intuition that motivated Alon and Smorodinsky's question (Question~\ref{question:AS}(1)).

\begin{theorem}\label{thm:VariantUBPlanar}
    There exists an absolute constant \(c>0\) such that for all \(s\ge 3\), \(F_{2}(2,s,s)\le cs\log{s}.\)
\end{theorem}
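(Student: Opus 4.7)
The plan is to argue by contradiction. Given a point set \(P\subset\mathbb{R}^2\) with \(|P|\ge Cs\log s\) for a suitable absolute constant \(C>0\), I aim to exhibit a 2-coloring \(P=P_1\sqcup P_2\) such that no disjoint-union \((s,s)\)-convex cover can have \(C_1\cap C_2=\emptyset\). The first step is a clean combinatorial reduction: if such a cover existed, then together with the within-bag disjointness \(C_{i,j}\cap C_{i,j'}=\emptyset\) for \(j\neq j'\), the assumption \(C_1\cap C_2=\emptyset\) forces all \(2s\) pieces to be pairwise disjoint. Replacing each piece by the convex hull of the points of \(P\) it contains yields a partition \(P=B_1\sqcup\cdots\sqcup B_k\) with \(k\le 2s\) into monochromatic blocks whose convex hulls are pairwise disjoint convex polygons with vertex set in \(P\). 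So it suffices to find a 2-coloring under which any such ``disjoint-hull monochromatic partition'' must have strictly more than \(2s\) blocks.

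To build this coloring, I would apply a controlled planar geometric transformation to bring \(P\) into a canonical form. The transformation --- for concreteness, either a projective map placing a large subset of \(P\) near a convex curve, or a plane-sweep that linearizes the combinatorial structure --- equips \(P\) with a natural linear order \(\pi\). I then 2-color the points along \(\pi\) using a balanced, fractal binary-tree labelling so that every interval of length larger than \(c\log s\) under \(\pi\) necessarily contains both colors. The intent is that in the canonical form, each monochromatic block of a disjoint-hull partition corresponds to an interval (or a bounded union of intervals) of \(\pi\); this caps its size at \(O(\log s)\).

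The auxiliary graph \(G\) is the bridge between the geometry and the order \(\pi\). Its vertices are the \(k\le 2s\) blocks, and its edges record face-adjacencies among the convex polygons \(\mathrm{conv}(B_1),\ldots,\mathrm{conv}(B_k)\) in the planar subdivision of their complement. Standard arguments for pairwise disjoint convex regions in the plane guarantee that \(G\) is planar, so \(|E(G)|\le 3k-6\le 6s-12\). Planarity of \(G\) is what allows us to run the interval-like analysis of blocks: together with the fractal coloring, it forces \(|B_j|=O(\log s)\) for every \(j\). Summing over blocks yields \(|P|=\sum_j|B_j|\le 2s\cdot O(\log s)=O(s\log s)\), contradicting \(|P|\ge Cs\log s\) once \(C\) is chosen large enough.

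The main obstacle I anticipate lies in step two --- faithfully translating the disjoint-hull geometry into the ``interval under \(\pi\)'' property. Naively, a convex-closed monochromatic subset of a planar point set can be huge (e.g.\ for points in convex position, \emph{every} subset is convex-closed), so the disjoint-hull condition must be used non-trivially in concert with the planar auxiliary graph to suppress this behaviour. The technical heart of the proof will be a structural lemma converting the planarity of \(G\) into a bounded-depth combinatorial property of the blocks --- plausibly via a forbidden-minor or edge-counting argument showing that a large monochromatic convex-closed block would compel too many pairwise adjacencies among the remaining blocks, violating planarity.
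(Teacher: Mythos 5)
Your opening reduction is sound: if a disjoint-union $(s,s)$-cover has $C_1\cap C_2=\emptyset$, the $2s$ pieces are pairwise disjoint, and intersecting with $P$ gives a partition of $P$ into at most $2s$ monochromatic blocks whose convex hulls are pairwise disjoint. The instinct that planarity should control things is also on the right track. But the core of the proposal --- a planar transformation producing a linear order $\pi$ under which the blocks become $O(\log s)$-length intervals, defeated by a fractal binary-tree coloring --- has a genuine gap, and you in fact identify it yourself. There is no reason a family of pairwise hull-disjoint monochromatic subsets of a planar point set should correspond to intervals of any single linear order: as you note, for points in convex position every subset is ``convex-closed,'' and a single transformation cannot fix this for all adversarial disjoint-hull partitions at once. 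Moreover, the step ``planarity of $G$ forces $|B_j|=O(\log s)$'' has no supplied mechanism: planarity of a graph on $k\le 2s$ block-vertices bounds $|E(G)|$ by $3k-6$, which says nothing about how many points of $P$ a single block may contain. The claimed ``structural lemma'' that a large block would compel too many adjacencies is precisely the missing content, and it is not a routine edge-counting consequence of planarity.

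The paper takes a fundamentally different route that avoids constructing an explicit coloring. It introduces the quantity $g(a)$ --- the minimum total number of facets needed in a \emph{separating system} (a family of interior-disjoint convex polyhedra, one containing each of $a$ given pairwise disjoint convex sets) --- and proves $g(a)\le 6a-12$. This is where planarity enters: one picks an incidence-maximizing, facet-minimizing separating system, shows every open facet contains an incidence with another polyhedron, and builds an auxiliary planar graph whose vertices are the $a$ polyhedra and whose edges are these incidences; $|E|\le 3a-6$ then bounds the total facet count by $6a-12$. The passage from this facet bound to $F_2(2,s,s)=O(s\log s)$ is then non-constructive: one considers the range space of subsets of $P$ cut out by unions of polyhedra with at most $\ell=g(2s)=O(s)$ facets, applies the Alon--Smorodinsky VC-dimension bound $O(\ell\log\ell)$ for such ranges, and concludes that a point set larger than that bound admits a partition that cannot be ``shattered,'' which is exactly the desired coloring. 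The $\log s$ comes from the VC-dimension estimate, not from an interval-length control, and no block-size bound of the form $O(\log s)$ is ever needed. If you want to salvage your outline, the key missing ingredient is to replace the linear-order/fractal-coloring step with this VC-dimension reduction, and to redirect the planar-graph argument to bound \emph{facets of separating polyhedra} rather than \emph{sizes of blocks}.
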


The passage from the planar near-linear phenomenon to higher parameters is not a routine lifting. 
Instead, geometric constraints can be encoded into a certain incidence hypergraph. This translation allows us to establish a connection with hypergraph Tur\'{a}n theory, yielding the following shape of bounds in the disjoint-union setting.

\begin{theorem}\label{thm:GeneralUpperbound}
Let $d$ and $r$ be two positive integers with $r\ge d+2$, and let $s$ be a sufficiently large integer relative to $d$ and $r$. Then there exists some constant \(c_{d,r}>0\) such that
\[
 F_r(d,s,\ldots,s)
  \le c_{d,r}\cdot \min\bigg\{s^{\bigl(1-\frac{1}{2^{d}(d+1)}\bigr)r+1}, s^{2d+3}\bigg\}.
\]
\end{theorem}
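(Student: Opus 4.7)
The plan is to reduce the geometric problem to a Turán-type statement about an auxiliary hypergraph capturing Helly obstructions, and then invoke a variant of the Erdős box theorem. Suppose, for contradiction, that $|P|=n$ exceeds the claimed bound yet $P$ admits no good partition. Then for every partition $P=P_1\sqcup\cdots\sqcup P_r$ there exist pairwise disjoint convex pieces $\{C_{i,j}\}_{i\in[r],\,j\in[s]}$ with $P_i\subseteq\bigsqcup_j C_{i,j}$ and $\bigcap_i\bigsqcup_j C_{i,j}=\emptyset$. Distributing intersection over disjoint union yields $\bigcap_{i=1}^r C_{i,j_i}=\emptyset$ for every tuple $(j_1,\ldots,j_r)\in[s]^r$. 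Since $r\ge d+2$, Helly's theorem furnishes, for each such tuple, a $(d+1)$-element subset of blocks on which the intersection is already empty.

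I would then record these witnesses in an $r$-partite, $(d+1)$-uniform auxiliary hypergraph $\mathcal{H}$ on vertex set $[r]\times[s]$ with parts $V_i=\{i\}\times[s]$, placing a hyperedge on $\{(i_1,j_1),\ldots,(i_{d+1},j_{d+1})\}$ whenever the corresponding $(d+1)$ pieces have empty intersection. Counting by tuples, the $s^r$ Helly certificates must force $\mathcal{H}$ to be Turán-dense. The key structural claim is that if $\mathcal{H}$ contains a sufficiently large complete $(d+1)$-partite box $K^{(d+1)}[t,\ldots,t]$ on some $(d+1)$-subset of blocks, then disjointness of the $C_{i,j}$ within each block — a geometric rigidity unavailable in the unrestricted model $f_r$ — allows a rearrangement of the piece indices yielding a tuple whose corresponding pieces actually share a common point, contradicting the standing assumption. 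Hence $\mathcal{H}$ must be box-free.

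The two bounds in the minimum come from two modes of applying the Erdős box estimate to the box-free hypergraph $\mathcal{H}$. Viewing $\mathcal{H}$ as $r$-partite $(d+1)$-uniform on parts of size $s$ and applying the $r$-partite Erdős box bound, then matching the edge count to $s^r$ via the tuple certificates, produces the estimate $n\le c_{d,r}\cdot s^{(1-1/(2^d(d+1)))r+1}$; here the exponent $1/(2^d(d+1))$ tracks the saving coming from the Erdős box exponent in $(d+1)$-uniform hypergraphs applied across $r$ parts. Alternatively, restricting attention to any fixed $(d+2)$-subset of blocks (available because $r\ge d+2$) yields a much smaller hypergraph on $(d+2)s$ vertices; applying the Erdős box bound there and translating back to points gives the $r$-independent estimate $s^{2d+3}$, which is sharper for large $r$.

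The main obstacle will be the structural claim of Step 2 — showing precisely that a large Turán box in $\mathcal{H}$ cannot be realized geometrically under the disjointness constraint within each block. This is where the \emph{power of disjointness} advertised in the paper's title enters: pairwise disjoint convex pieces within a block are strongly separated by hyperplanes, which rigidly limits how Helly-empty configurations can be arranged across blocks, so that a dense box in $\mathcal{H}$ must contain a ``consistent'' sub-alignment of piece indices whose corresponding convex sets actually intersect. Calibrating the box parameter $t$ so that both bounds in the minimum emerge from the same Erdős box machinery, and tracking the dependence of constants on $d$ and $r$, is the delicate balancing act at the heart of the argument.
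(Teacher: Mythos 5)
Your high-level instinct — feeding a Helly/separating-hyperplane argument into an Erd\H{o}s box bound, exploiting the disjointness within each block — is indeed the paper's strategy, but there are two genuine gaps. The first is a logical one: you orient the auxiliary hypergraph the wrong way. You place a hyperedge on a $(d+1)$-tuple when the corresponding pieces have \emph{empty} intersection and then assert that a box in $\mathcal{H}$ can be ``rearranged'' to produce a tuple with nonempty intersection, so $\mathcal{H}$ must be box-free. That is false: take all pieces $C_{i,j}$, across all blocks, to be pairwise disjoint; then every $(d+1)$-wise intersection is empty, $\mathcal{H}$ is the complete $r$-partite $(d+1)$-uniform hypergraph (full of boxes), and no nonempty intersection appears anywhere. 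What is true — and what the paper's Claim~\ref{claim:auxi} proves by induction with separating hyperplanes — is the opposite direction: for $d+1$ disjoint pairs of convex sets in $\mathbb{R}^d$, \emph{some} selection has empty intersection, which is precisely the statement that the hypergraph $\mathcal{G}$ of \emph{nonempty} intersections contains no $K^{(d+1)}_{2,\ldots,2}$. Box-freeness, and hence the Tur\'an saving, live in $\mathcal{G}$, not in $\mathcal{H}$; density of $\mathcal{H}$ is merely sparsity of $\mathcal{G}$, which is what you want, not a contradiction.

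The second gap is that your proposal never actually bounds $|P|$. The auxiliary hypergraph lives on $[r]\times[s]$, not on $P$, and the contradiction you sketch would, if it worked, apply to \emph{any} family of disjointly-partitioned $s$-convex sets with empty common intersection, regardless of $n$ — which proves too much (it would give $F_r = 1$). What is missing is the bridge the paper builds in Proposition~\ref{prop:ConvextPolyhedrons} and the VC-dimension / $r$-shattering step: the (corrected) Tur\'an bound gives $q = O_{d,r}(s^{\alpha})$ nonempty $(r-1)$-wise partial intersections; each convex piece can be enlarged to a polyhedron with $O(q+s)$ facets that still separates it from the other pieces in its block and from all $q$ partial intersections of the remaining blocks, so each $C_i$ is replaced by a union of polyhedra with total facet count $\ell = O\bigl(rs(q+s)\bigr)$; Lemma~\ref{lem:AS-VC} then bounds the VC-dimension of the resulting range space by $O(d\ell\log\ell)$, and Lemma~\ref{lemma:RSS} bounds any $r$-shattered set by $O(dr^2\log r\cdot\ell\log\ell)$. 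That last quantity is where $n$ finally enters and where the exponents in the theorem come from. Your phrases ``matching the edge count to $s^r$'' and ``translating back to points'' are placeholders for this entire mechanism; without it, the argument has no conclusion about the size of $P$.
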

Regarding Question~\ref{question:AS}(2), Alon and Smorodinsky~\cite{2025arxivAlonSmo} showed that for fixed \(r\), \(f_{r}(d,s,\ldots,s)\le \textup{Poly}(s,d)\); by \eqref{equation:RelationFf} the polynomial bound also holds for \(F_{r}(d,s,\ldots,s)\). In the complementary regime, according to the proof of \cref{thm:GeneralUpperbound}, one can see that for fixed \(d\), \(F_{r}(d,s,\ldots,s)\) is also bounded by \(O_{d}(r^{3}\log{r}\cdot s^{2d+3})\). Moreover, when both \(r\) and \(d\) are fixed, \cref{thm:GeneralUpperbound} improves the bound \(O_{r,d}(s^{r})\) of \cref{thm:AS-general} to \(O_{r,d}\big(s^{(1-\varepsilon_{d})r+1}\big)\) for \(\varepsilon_{d}=\frac{1}{2^{d}(d+1)}\) and \(O_{r,d}\big(s^{2d+3}\big)\). It is not hard to see the bound \(O_{r,d}(s^{2d+3})\) performs much better when \(r\) is larger than \(3d\).

All results can be extended to the asymmetric setting by similar arguments. For instance, one can directly obtain \(f_{r}(d,s_{1},\ldots,s_{r})>\prod_{i=1}^{r}s_{i}\) for any \(d\ge 2r-2\), which also yields that the upper bound in~\cref{thm:AS-general} is near-optimal for large \(s_{1},\ldots,s_{r}\).

\section{Scalloped \(s\)-gon in the plane and \(f_{2}(2,s,s)> s^{2}\)}
In this section, we develop a planar construction and give a complete proof of \cref{thm:quadraticLB}. We begin by fixing the notation and recalling basic geometric facts in \cref{sec:prepa}. In \cref{sec:Viewpoints} we present an intuitive overview meant to be read independently of the proof. We then detail the selection of the \(s^{2}\) points and establish the disjointness property, which together complete the argument.
\subsection{Notation and some geometric facts}\label{sec:prepa}
Let \(\mathbb{R}^n\) be the \(n\)-dimensional Euclidean space with its metric topology. For \(\varepsilon>0\) and \(\bm{x}\in \mathbb{R}^n\), we write
\(B(\bm{x},\varepsilon):=\{\bm{y}\in\mathbb{R}^n:\|\bm{y}-\bm{x}\|<\varepsilon\}\) for the open ball of radius \(\varepsilon\) centered at \(\bm{x}\).

% Interior
\paragraph{Interior.}
The \emph{interior} of \(S\subseteq\mathbb{R}^n\) is
\[
S^{\circ}\ :=\ \{\,\bm{x}\in S:\ \exists\, \varepsilon>0\ \textup{such that}\ B(\bm{x},\varepsilon)\subseteq S\,\}.
\]
Moreover, if $S$ is a subset of some proper subspace $W\subseteq\mathbb{R}^{n}$, the \emph{relative interior} of $S$ with respect to $W$ is defined as its interior as a subset of $W$. 

% Boundary
\paragraph{Boundary.}
The \emph{boundary} of \(S\subseteq\mathbb{R}^n\) is
\[
\partial S\ :=\ \overline{S}\setminus S^{\circ}
\ =\
\bigl\{\,\bm{x}\in\mathbb{R}^n:\ \forall\, r>0,\ B(\bm{x},\varepsilon)\cap S\neq\emptyset\ \text{and}\ B(\bm{x},\varepsilon)\setminus S\neq\emptyset\,\bigr\},
\]
where \(\overline{S}\) denotes the (topological) \emph{closure} of \(S\) in \(\mathbb{R}^n\).

\medskip
For \(\bm{a},\bm{b},\bm{c}\in\mathbb{R}^n\), we use $\angle(\bm{b}-\bm{a},\bm{b}-\bm{c})$ for the induced angle between vectors $\bm{b}-\bm{a}$ and $\bm{b}-\bm{c}$. Besides, we use $(\bm{b}-\bm{a})\cdot(\bm{b}-\bm{c})$ for the dot product of these two vectors.

Following the notion from~\cite{2021CombConvex}, 
a \emph{closed halfspace} $\mathbb{H}\subseteq\mathbb{R}^n$ is the set $\{\bm{x}\in \mathbb{R}^n:\alpha^T\bm{x}\geq a\}$ for some $\alpha\in\mathbb{R}^n$ and $a\in\mathbb{R}$.
A \emph{hyperplane}
$H\subseteq\mathbb{R}^n$ is the set $\{\bm{x}\in\mathbb{R}^n:\alpha^T\bm{x}=a\}$ for some $\alpha\in\mathbb{R}^n$, $a\in\mathbb{R}$. For points \(\bm{x}_{1},\ldots,\bm{x}_{m}\in\mathbb{R}^{n}\), define its \emph{convex hull} as 
\[
\operatorname{Conv}(\{\bm{x}_{1},\ldots,\bm{x}_{m}\})=\{\bm{x}=\sum_{i=1}^{m}\alpha_{i}\bm{x}_{i}:\alpha_{i}\ge0,\ \sum_{i=1}^{m}\alpha_{i}=1\}.
\]
In particular, \(\operatorname{Conv}(\emptyset)=\emptyset.\) A \emph{convex polytope} \(P\subseteq \mathbb{R}^n\) is the convex hull of finitely many points and a \emph{convex polyhedron} is the intersection of finitely many halfspaces.
For a convex polytope $P=\operatorname{Conv}(\{\bm{x}_1,\bm{x}_2,\ldots,\bm{x}_m\})$, the points $\bm{x}_1, \bm{x}_2,\ldots,\bm{x}_m$ are usually called the \emph{vertices} or \emph{generators} of $P$.
 A set \(X\subseteq \mathbb{R}^n\) is \emph{in convex position} if no point of \(X\) lies in the convex hull of the others, that is,
for every \(\bm{x}\in X\), \(\bm{x}\notin \operatorname{Conv}\bigl(X\setminus\{\bm{x}\}\bigr).\) 
For a point set $V\subseteq \mathbb{R}^n$, 
a point \(\bm{v}\in V\) is an \emph{extreme point} of \(V\) if \(\bm{v}\notin\textup{Conv}(V\setminus\{\bm{v}\})\). We write \(\operatorname{Ext}(V)\) for the set of extreme points of \(V\), then 
\[
\operatorname{Ext}(V)=\bigl\{\bm{v}\in V: \bm{v}\notin \operatorname{Conv}\bigl(V\setminus\{\bm{v}\}\bigr)\bigr\}.
\]
We collect a few elementary geometric facts for later use.
\begin{fact}\label{fact:COnvexPosition}\
    \begin{enumerate}
        \item[\textup{(1)}] Any subset of the unit circle \(\mathbb{S}^{1}\subseteq \mathbb{R}^{2}\) is in convex position.
        \item[\textup{(2)}] For any integer \(d\ge 1\), consider the standard product embedding
\[
\mathbb{T}^{d}:=\underbrace{\mathbb{S}^{1}\times\cdots\times\mathbb{S}^{1}}_{d\text{ times}}
 \subseteq \mathbb{R}^{2d},
\]
where the \(k\)-th copy of \(\mathbb{S}^{1}\) lies in its own coordinate plane. Then every subset of \(\mathbb{T}^{d}\) is in convex position.
\item[\textup{(3)}] If a set \(X\subseteq \mathbb{R}^{n}\) is in convex position, then \(\mathrm{Ext}(X)=X\).
    \end{enumerate}
\end{fact}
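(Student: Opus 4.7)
All three parts follow from the general principle that a strictly convex body has each boundary point as the unique maximizer of some linear functional; I would organize the argument around this principle.

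For part (1), the plan is to exhibit a separating hyperplane at every point. Fix a subset \(X\subseteq \mathbb{S}^{1}\) and any \(\bm{x}\in X\). Consider the linear functional \(\ell_{\bm{x}}(\bm{y})=\bm{x}\cdot\bm{y}\). By the Cauchy--Schwarz inequality, for every \(\bm{y}\in\mathbb{S}^{1}\) one has \(\ell_{\bm{x}}(\bm{y})\le\|\bm{x}\|\|\bm{y}\|=1\), with equality iff \(\bm{y}=\bm{x}\). Hence \(\ell_{\bm{x}}(\bm{z})<1\) strictly for every \(\bm{z}\in X\setminus\{\bm{x}\}\). Since \(\ell_{\bm{x}}\) is linear, the same strict inequality passes to any convex combination of points of \(X\setminus\{\bm{x}\}\), and so \(\bm{x}\notin\operatorname{Conv}(X\setminus\{\bm{x}\})\).

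For part (2), the strategy is to lift the same functional to the product. For a point \(\bm{p}=(\bm{x}_{1},\dots,\bm{x}_{d})\in\mathbb{T}^{d}\) with each \(\bm{x}_{k}\in\mathbb{S}^{1}\), I would introduce the linear functional
\[
L_{\bm{p}}(\bm{q})=\sum_{k=1}^{d}\bm{x}_{k}\cdot\bm{y}_{k}\qquad\text{for }\bm{q}=(\bm{y}_{1},\dots,\bm{y}_{d})\in\mathbb{R}^{2d}.
\]
Restricted to \(\mathbb{T}^{d}\), Cauchy--Schwarz applied coordinate-wise gives \(L_{\bm{p}}(\bm{q})\le d\) with equality iff \(\bm{y}_{k}=\bm{x}_{k}\) for every \(k\), i.e.\ iff \(\bm{q}=\bm{p}\). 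So for any \(X\subseteq\mathbb{T}^{d}\) and any \(\bm{p}\in X\), every \(\bm{q}\in X\setminus\{\bm{p}\}\) satisfies \(L_{\bm{p}}(\bm{q})<d\), and linearity again propagates this to convex combinations, ruling out \(\bm{p}\in\operatorname{Conv}(X\setminus\{\bm{p}\})\). Part (3) is then immediate from comparing the two definitions in the excerpt: ``\(X\) in convex position'' says \(\bm{x}\notin\operatorname{Conv}(X\setminus\{\bm{x}\})\) for all \(\bm{x}\in X\), which is exactly the defining condition for \(\bm{x}\in\operatorname{Ext}(X)\), so \(\operatorname{Ext}(X)=X\).

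There is no real obstacle here; each part is a routine verification. The only subtlety worth flagging is that the statement is asserted for \emph{arbitrary} (possibly infinite) subsets, so one must avoid any argument that first picks a finite subset and appeals to polytope structure. The linear-functional argument above avoids that pitfall since strict inequality at each point of \(X\setminus\{\bm{x}\}\) survives under arbitrary convex combinations (which by definition involve only finitely many nonzero coefficients).
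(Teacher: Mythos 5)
Your proof is correct. The paper states this as a \emph{fact} and gives no proof at all (it is labeled as one of ``a few elementary geometric facts for later use''), so there is no paper argument to compare against. Your linear-functional argument is the standard and clean way to establish strict convex position on $\mathbb{S}^1$ and its products, and you correctly flag the one subtlety (arbitrary subsets, not just finite ones) and correctly observe that convex combinations involve only finitely many nonzero coefficients so the strict inequality survives. Part (3) is indeed just a matter of unwinding the two definitions as given in the preliminaries. One minor cosmetic note: in part (2) it would be slightly cleaner to write that $L_{\bm{p}}$ is the ordinary Euclidean inner product of $\bm{p}$ and $\bm{q}$ on $\mathbb{R}^{2d}$, restricted to $\mathbb{T}^{d}$ (which is contained in the sphere of radius $\sqrt{d}$ centered at the origin), so that part (2) is literally the same argument as part (1) applied to a sphere; but your coordinate-wise phrasing is equally valid.
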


\subsection{Geometric viewpoint of the construction}\label{sec:Viewpoints}
To show the lower bound \(f_{r}(d,s_{1},\ldots,s_{r})>f\), it suffices to construct a set of points \(P\subseteq\mathbb{R}^{d}\) of size \(f\) with the following property: for any partition \(P=\bigsqcup_{i=1}^{r}P_{i}\), there exists a family of sets \(C_{1},C_{2},\ldots,C_{r}\) with \(P_{i}\subseteq C_{i}\), where \(C_{i}\) is an \(s_{i}\)-convex set for every \(i\in [r]\), such that \(\bigcap_{i=1}^{r}C_{i}=\emptyset.\) 

Our construction is strikingly simple: take an \(s\)-gon in the plane, replace each side by a short inward-curving circular arc from a circle of extremely large radius to form a \emph{scalloped \(s\)-gon}, see~\cref{fig:Sca10} and~\cref{fig:Sca30}, and then select \(s\) points in the central segment of each arc.

\begin{figure}[htbp]
\centering
\begin{minipage}[t]{0.48\textwidth}
\centering
\begin{tikzpicture}[scale=1.2]
    \foreach \i in {0,...,9} {
        \coordinate (P\i) at (90+36*\i:2);
    }
    
    \foreach \i [evaluate=\i as \next using {int(mod(\i+1,10))}] in {0,...,9} {
        \pgfmathsetmacro{\colorvalue}{\i/9*100}
       
        \path (P\i) -- (P\next) coordinate[midway] (M\i);
        \coordinate (C\i) at ($(M\i)!0.04!(0,0)$); 
        
        \draw[line width=1.5pt, color=red!\colorvalue!yellow!\colorvalue!blue!] 
            (P\i) .. controls (C\i) .. (P\next);
    }
\end{tikzpicture}
\caption{Scalloped \(10\)-gon}
\label{fig:Sca10}
\end{minipage}
\begin{minipage}[t]{0.48\textwidth}
\centering
\begin{tikzpicture}[scale=0.85]
    
    \foreach \i in {0,...,29} {
        \coordinate (P\i) at (90+12*\i:3);
    }
  
    \foreach \i [evaluate=\i as \next using {int(mod(\i+1,30))}] in {0,...,29} {
        \pgfmathsetmacro{\colorvalue}{\i/29*100}
      
        \path (P\i) -- (P\next) coordinate[midway] (M\i);
        \coordinate (C\i) at ($(M\i)!0.02!(0,0)$); 
        
        \draw[line width=1.5pt, color=red!\colorvalue!yellow!\colorvalue!blue!] 
            (P\i) .. controls (C\i) .. (P\next);
    }
\end{tikzpicture}
\caption{Scalloped \(30\)-gon}
\label{fig:Sca30}
\end{minipage}
\end{figure}

Formally, we arrange \(s\) many sufficiently large and congruent disks at equally spaced directions around the origin and, on each disk, select \(s\) points along a tiny boundary arc that faces the origin. This produces \(s^{2}\) points naturally organized into \emph{rows} (points on the same disk) and \emph{columns} (points at the same angular position across different disks). Given an arbitrary bipartition of these \(s^{2}\) points, we form one union of convex sets by taking the convex hull of each row inside the first part, and another union by taking the convex hull of each column inside the second part. It is easy to see that every row-wise convex hull is contained in its corresponding disk. Moreover, the choice of very large radii and extremely short arcs ensures that every column-wise convex hull lies in the supporting halfspaces determined by tangents at those points, and therefore avoids the interiors of all disks. Consequently, the two unions are disjoint for every bipartition, which is the geometric mechanism underlying the desired lower bound. For illustration, we include a schematic one with \(s=6\) in~\cref{figure:66666666}, from which we believe the construction becomes immediately transparent.

\begin{figure}[ht]
    \centering
\begin{tikzpicture}
 \draw [thick,dashed] (4,6.9282) arc(180:240:8);
 \draw [thick,dashed] (-4,6.9282) arc(240:300:8);
 \draw [thick,dashed] (-8,0) arc(300:360:8);
 \draw [thick,dashed] (-4,-6.9282) arc(0:60:8);
 \draw [thick,dashed] (4,-6.9282) arc(60:120:8);
 \draw [thick,dashed] (8,0) arc(120:180:8);

 \coordinate (z_{11}) at (5.872,1.786);
 \filldraw[black] (z_{11}) circle (1pt) node [above right]
{$z_{11}$};
 \coordinate (z_{12}) at (5.528,2.226);
 \filldraw[black] (z_{12}) circle (1pt) node [above right]
{$z_{12}$};
 \coordinate (z_{13}) at (5.216,2.689);
 \filldraw[black] (z_{13}) circle (1pt) node [above right]
{$z_{13}$};
 \coordinate (z_{14}) at (4.936,3.172);
 \filldraw[black] (z_{14}) circle (1pt) node [above right]
{$z_{14}$};
\coordinate (z_{15}) at (4.692,3.674);
 \filldraw[black] (z_{15}) circle (1pt) node [above right]
{$z_{15}$};
 \coordinate (z_{16}) at (4.482,4.192);
 \filldraw[black] (z_{16}) circle (1pt) node [above right]
{$z_{16}$};
 \coordinate (z_{21}) at (1.389,5.969);
 \filldraw[black] (z_{21}) circle (1pt) node [above]
{$z_{21}$};
\coordinate (z_{22}) at (0.836,5.900);
 \filldraw[black] (z_{22}) circle (1pt) node [above]
{$z_{22}$};
 \coordinate (z_{23}) at (0.279,5.862);
 \filldraw[black] (z_{23}) circle (1pt) node [above]
{$z_{23}$};
 \coordinate (z_{24}) at (-0.279,5.862);
 \filldraw[black] (z_{24}) circle (1pt) node [above]
{$z_{24}$};
 \coordinate (z_{25}) at (-0.836,5.900);
 \filldraw[black] (z_{25}) circle (1pt) node [above]
{$z_{25}$};
 \coordinate (z_{26}) at (-1.389,5.969);
 \filldraw[black] (z_{26}) circle (1pt) node [above]
{$z_{26}$};
 \coordinate (z_{31}) at (-4.482,4.192);
 \filldraw[black] (z_{31}) circle (1pt) node [above left]
{$z_{31}$};
\coordinate (z_{32}) at (-4.692,3.674);
 \filldraw[black] (z_{32}) circle (1pt) node [above left]
{$z_{32}$};
 \coordinate (z_{33}) at (-4.936,3.172);
 \filldraw[black] (z_{33}) circle (1pt) node [above left]
{$z_{33}$};
 \coordinate (z_{34}) at (-5.216,2.689);
 \filldraw[black] (z_{34}) circle (1pt) node [above left]
{$z_{34}$};
\coordinate (z_{35}) at (-5.528,2.226);
 \filldraw[black] (z_{35}) circle (1pt) node [above left]
{$z_{35}$};
 \coordinate (z_{36}) at (-5.872,1.786);
 \filldraw[black] (z_{36}) circle (1pt) node [above left]
{$z_{36}$};
 \coordinate (z_{41}) at  (-5.872,-1.786);
 \filldraw[black] (z_{41}) circle (1pt) node [below left]
{$z_{41}$};
 \coordinate (z_{42}) at (-5.528,-2.226);
 \filldraw[black] (z_{42}) circle (1pt) node [below left]
{$z_{42}$};
 \coordinate (z_{43}) at (-5.216,-2.689);
 \filldraw[black] (z_{43}) circle (1pt) node [below left]
{$z_{43}$};
 \coordinate (z_{44}) at (-4.936,-3.172);
 \filldraw[black] (z_{44}) circle (1pt) node [below left]
{$z_{44}$};
\coordinate (z_{45}) at (-4.692,-3.674);
 \filldraw[black] (z_{45}) circle (1pt) node [below left]
{$z_{45}$};
 \coordinate (z_{46}) at (-4.482,-4.192);
 \filldraw[black] (z_{46}) circle (1pt) node [below left]
{$z_{46}$};
 \coordinate (z_{51}) at (-1.389,-5.969);
 \filldraw[black] (z_{51}) circle (1pt) node [below]
{$z_{51}$};
\coordinate (z_{52}) at (-0.836,-5.900);
 \filldraw[black] (z_{52}) circle (1pt) node [below]
{$z_{52}$};
 \coordinate (z_{53}) at (-0.279,-5.862);
 \filldraw[black] (z_{53}) circle (1pt) node [below]
{$z_{53}$};
 \coordinate (z_{54}) at (0.279,-5.862);
 \filldraw[black] (z_{54}) circle (1pt) node [below]
{$z_{54}$};
 \coordinate (z_{55}) at (0.836,-5.900);
 \filldraw[black] (z_{55}) circle (1pt) node [below]
{$z_{55}$};
 \coordinate (z_{56}) at (1.389,-5.969);
 \filldraw[black] (z_{56}) circle (1pt) node [below]
{$z_{56}$};
 \coordinate (z_{61}) at (4.482,-4.192);
 \filldraw[black] (z_{61}) circle (1pt) node [below right]
{$z_{61}$};
\coordinate (z_{62}) at (4.692,-3.674);
 \filldraw[black] (z_{62}) circle (1pt) node [below right]
{$z_{62}$};
 \coordinate (z_{63}) at (4.936,-3.172);
 \filldraw[black] (z_{63}) circle (1pt) node [below right]
{$z_{63}$};
 \coordinate (z_{64}) at (5.216,-2.689);
 \filldraw[black] (z_{64}) circle (1pt) node [below right]
{$z_{64}$};
\coordinate (z_{65}) at (5.528,-2.226);
 \filldraw[black] (z_{65}) circle (1pt) node [below right]
{$z_{65}$};
 \coordinate (z_{66}) at (5.872,-1.786);
 \filldraw[black] (z_{66}) circle (1pt) node [below right]
{$z_{66}$};

\draw [black](z_{11})--(z_{21})--(z_{31})--(z_{41})--(z_{51})--(z_{61})--(z_{11});
\draw [red](z_{12})--(z_{22})--(z_{32})--(z_{42})--(z_{52})--(z_{62})--(z_{12});
\draw [blue](z_{13})--(z_{23})--(z_{33})--(z_{43})--(z_{53})--(z_{63})--(z_{13});
\draw [yellow](z_{14})--(z_{24})--(z_{34})--(z_{44})--(z_{54})--(z_{64})--(z_{14});
\draw [green](z_{15})--(z_{25})--(z_{35})--(z_{45})--(z_{55})--(z_{65})--(z_{15});
\draw [purple](z_{16})--(z_{26})--(z_{36})--(z_{46})--(z_{56})--(z_{66})--(z_{16});
    
\end{tikzpicture}
\caption{We select \(s = 6\) points on each circular arc. This figure illustrates that each convex polygon intersects each disk only at the selected points.}\label{figure:66666666}
\end{figure}

\subsection{Selection of various geometric parameters}\label{subsection:Selection}
Recall that \(f_{2}(2,1,1)=4\). We first describe the construction for \(f_{2}(2,s,s)> s^{2}\) for \(s\ge 2\). We construct a set \(P\) of points with \(|P|=s^{2}\) as follows.

Fix an integer \(s\ge 2\) and for each \(k\in [s]\), we set
\[ \phi_k:=\frac{(2k+1)\pi}{s}.
\]
Define $M$ to be a sufficiently large real number such that
\begin{equation}\label{eq:choiceofM}
   2(M-1)\sin{\bigg(\frac{\pi}{s}\bigg)\cos{\bigg(\frac{(s+2)\pi}{2s}}\bigg)}+6<0, 
\end{equation}
then we can see
$$M>\frac{3}{(\sin{\frac{\pi}{s}})^{2}}+1.$$
Define a cyclic family of points on the complex plane (identified with \(\mathbb{R}^2\)):
\begin{equation}\label{eq:y_k}
  \bm{y}_k:=M\cdot e^{\,i\phi_k}\quad (k\in[s]).
\end{equation}
For each \(k\in [s]\), let $S_k$ be the closed disk centered at $\bm{y}_k$ with radius
\begin{equation}\label{eq:choiceofR}
    R:=M-1,
\end{equation}
and denote the unit center direction by
\[
\bm{u}_k:=\frac{\bm{y}_k}{\|\bm{y}_k\|}=(\cos\phi_k,\sin\phi_k).
\]
Clearly $\bm{u}_k\in \partial S_k$.

For each \(k\), let $I_k\subseteq\partial S_k$ be the minor arc centered at the point \(\bm{u}_{k}\), with central width
\begin{equation}\label{eq:delta-simple}
\delta:= M^{-2}.
\end{equation}
Equivalently, if $t$ is the central angle (measured at $\bm{y}_k$) from that closest point, then
\[
I_k=\{\bm{y}_k-R\cos{t}\cdot\bm{u}_k - R\sin t\cdot\bm{u}_k^\perp:\ |t|\le \delta/2\},
\]
where $\bm{u}_k^\perp=(-\sin\phi_k,\cos\phi_k)$ is a unit vector orthogonal to $\bm{u}_k$.

\subsection{Selection of points}\label{section:SelectionP}
For each \(k\in [s],\) we place \(s\) points
\[
  \bm{z}_{k,1},\dots,\bm{z}_{k,s}\in I_k
\]
in the clockwise direction along $\partial S_k$. Finally set
\begin{equation}\label{eq:PPP}
      P:=\{\bm{z}_{i,j}: i,j\in[s]\}.
\end{equation}
Note that \(|P|=s^2\). Consider an arbitrary bipartition \(P=P_1\sqcup P_2\).
For each row index \(i\in[s]\) define the row-convex container
\[
  C_i:=\operatorname{Conv}\{\bm{z}_{i,k}\in P_1: k\in[s]\},
\] 
and for each column index \(j\in[s]\) define the column-convex container
\[
  D_j:=\operatorname{Conv}\{\bm{z}_{k,j}\in P_2: k\in[s]\}.
\]
Finally put
\[
  C:=\bigcup_{i=1}^s C_i,\qquad \text{~and~} \qquad D:=\bigcup_{j=1}^s D_j.
\]
By construction, \(C\) and \(D\) are \(s\)-convex sets.

\subsection{Disjointness}
By definitions, we can immediately obtain the following claim.
\begin{claim}\label{Claim:PlanarContain}
    \(P_{1}\subseteq C\) and \(P_{2}\subseteq D\).
\end{claim}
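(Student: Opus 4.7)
The claim is essentially a direct bookkeeping verification from the definitions, so the plan is to unfold the definitions and observe that each point of $P$ is a generator of exactly one convex hull on each side. I would take an arbitrary point $\bm{z}_{i,j}\in P_1$; by the very definition of the row container $C_i=\operatorname{Conv}\{\bm{z}_{i,k}\in P_1: k\in[s]\}$, the point $\bm{z}_{i,j}$ belongs to the set of generators (since it is a point in row $i$ lying in $P_1$), hence $\bm{z}_{i,j}\in C_i$, and therefore $\bm{z}_{i,j}\in C=\bigcup_{i=1}^s C_i$. This already gives $P_1\subseteq C$.

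The inclusion $P_2\subseteq D$ is symmetric: for any $\bm{z}_{i,j}\in P_2$, it is a generator of $D_j=\operatorname{Conv}\{\bm{z}_{k,j}\in P_2:k\in[s]\}$, so $\bm{z}_{i,j}\in D_j\subseteq D$. There is no genuine obstacle here; the only thing to be careful about is the degenerate case in which some row (resp.\ column) contributes no points to $P_1$ (resp.\ $P_2$), in which case the corresponding $C_i$ (resp.\ $D_j$) is empty by the convention $\operatorname{Conv}(\emptyset)=\emptyset$. This case is harmless because the argument above shows that whenever $\bm{z}_{i,j}\in P_1$, the set defining $C_i$ is nonempty and contains $\bm{z}_{i,j}$, and likewise for $D_j$. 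The real work of the construction lies not in this claim but in the next step, namely showing $C\cap D=\emptyset$ via the large-radius scalloping estimates from \eqref{eq:choiceofM}--\eqref{eq:delta-simple}; the present claim serves only to certify that $(C,D)$ is an admissible pair of $s$-convex covers for the bipartition $P_1\sqcup P_2$.
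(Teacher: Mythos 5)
Your proof is correct and follows essentially the same route as the paper's: each point of $P_1$ (resp.\ $P_2$) is by definition a generator of its row-hull $C_i$ (resp.\ column-hull $D_j$), hence lies in $C$ (resp.\ $D$). The brief remark about the degenerate case $\operatorname{Conv}(\emptyset)=\emptyset$ is a harmless addition that the paper leaves implicit.
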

\begin{poc}
    Fix \(\bm{z}_{i,j}\in P_1\). Then \(\bm{z}_{i,j}\) is one of the generators of \(C_i\); hence \(\bm{z}_{i,j}\in C_i\subseteq C\). Similarly, if \(\bm{z}_{i,j}\in P_2\), then it is a generator of \(D_j\) and lies in \(D_j\subseteq D\).
\end{poc}

Then it suffices to show the following lemma.
\begin{lemma}\label{lemma:KeyInConstruction}
   Let \(C\) and \(D\) be two \(s\)-convex sets defined as above, then \(C\cap D=\emptyset.\)
\end{lemma}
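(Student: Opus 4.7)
The plan is to prove $C_i\cap D_j=\emptyset$ for every pair $(i,j)\in[s]\times[s]$; the union statement $C\cap D=\bigcup_{i,j}(C_i\cap D_j)=\emptyset$ then follows. The whole argument is driven by a single geometric \emph{tangent-separation statement}: for every $i,j\in[s]$ and every $k\in[s]\setminus\{i\}$,
\begin{equation*}
  (\bm{z}_{k,j}-\bm{z}_{i,j})\cdot(\bm{y}_i-\bm{z}_{i,j})<0.
\end{equation*}
Writing $H^{+}_{ij}:=\{\bm{x}\in\mathbb{R}^{2}:(\bm{x}-\bm{z}_{i,j})\cdot(\bm{y}_i-\bm{z}_{i,j})\ge 0\}$ for the closed halfspace bounded by the tangent line to $S_i$ at $\bm{z}_{i,j}\in\partial S_i$ and containing the center $\bm{y}_i$, and $H^{-}_{ij}$ for its open complement, the displayed inequality says precisely that every other column-$j$ point lies strictly in $H^{-}_{ij}$. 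Since $S_i$ is tangent to $\partial H^{+}_{ij}$ exactly at $\bm{z}_{i,j}$, one has $S_i\subseteq H^{+}_{ij}$ and $S_i\cap H^{-}_{ij}=\emptyset$, so this is a strict separation of every point $\bm{z}_{k,j}$ ($k\neq i$) from the entire disk $S_i$ via the tangent at $\bm{z}_{i,j}$.

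Granting the tangent separation, I would finish by a short case analysis on the side of the bipartition containing $\bm{z}_{i,j}$. If $\bm{z}_{i,j}\in P_1$, then every generator of $D_j$ lies in $H^{-}_{ij}$ (as $\bm{z}_{i,j}\notin P_2$), so $D_j\subseteq H^{-}_{ij}$; combined with $C_i\subseteq S_i\subseteq H^{+}_{ij}\setminus H^{-}_{ij}$ this yields $C_i\cap D_j=\emptyset$. If $\bm{z}_{i,j}\in P_2$, then $D_j=\operatorname{Conv}(\{\bm{z}_{i,j}\}\cup T')$ for some $T'\subseteq H^{-}_{ij}$; writing a point of $D_j$ as $\lambda\bm{z}_{i,j}+\sum_{k}\mu_k\bm{t}_k$ with $\bm{t}_k\in T'$, its signed distance to the tangent is $\sum_{k}\mu_k(\bm{t}_k-\bm{z}_{i,j})\cdot(\bm{y}_i-\bm{z}_{i,j})\le 0$, with equality only when every $\mu_k=0$. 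Hence $D_j\cap H^{+}_{ij}=\{\bm{z}_{i,j}\}$, and in particular $D_j\cap S_i\subseteq\{\bm{z}_{i,j}\}$. Since the generators of $C_i$ together with $\bm{z}_{i,j}$ form a finite subset of the circle $\partial S_i$, they are in convex position by \cref{fact:COnvexPosition}(1); as $\bm{z}_{i,j}\in P_2$ is not a generator of $C_i$, we get $\bm{z}_{i,j}\notin C_i$ and thus $C_i\cap D_j\subseteq\{\bm{z}_{i,j}\}\cap C_i=\emptyset$.

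The main obstacle is therefore the tangent-separation inequality itself, which is the single place where the precise choice of $M,R,\delta$ is used. I would verify it by direct expansion: substituting $\bm{z}_{k,j}=(M-R\cos t_j)\bm{u}_k-R\sin t_j\,\bm{u}_k^{\perp}$ with $|t_j|\le\delta/2=\tfrac12 M^{-2}$, and using $\bm{u}_k\cdot\bm{u}_i=\cos(\phi_k-\phi_i)$ together with $\bm{u}_k^{\perp}\cdot\bm{u}_i=-\sin(\phi_k-\phi_i)$, the left-hand side collapses after routine bookkeeping to
\begin{equation*}
  R\bigl[(\cos(\phi_k-\phi_i)-1)\bigl(M\cos t_j-R\bigr)+M\sin t_j\sin(\phi_k-\phi_i)\bigr].
\end{equation*}
The dominant piece is the first summand: $\cos(\phi_k-\phi_i)-1\le-2\sin^{2}(\pi/s)$ because the minimum angular gap between distinct $\phi_k,\phi_i$ is $2\pi/s$, while $M\cos t_j-R=1-M(1-\cos t_j)=1+O(M^{-3})$ is close to $1$. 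The second summand is controlled by $|M\sin t_j|\le M\delta/2=\tfrac12 M^{-1}$. Matching the resulting upper bound $\le -R\bigl(2\sin^{2}(\pi/s)(1-O(M^{-3}))-\tfrac12 M^{-1}\bigr)$ against $0$ is exactly what the threshold $M>3/\sin^{2}(\pi/s)+1$, equivalently \eqref{eq:choiceofM}, is designed to enforce, so the separation should drop out as an elementary if somewhat tedious trigonometric estimate.
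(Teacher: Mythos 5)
Your proof is correct and rests on the same key geometric fact as the paper's argument: the tangent to $S_i$ at $\bm{z}_{i,j}$ strictly separates the disk $S_i$ from every other column-$j$ point, which is exactly the paper's Claim~\ref{claim:Negative} after relabeling ($\bm{v}_k=\bm{z}_{i,j},\ \bm{v}_\ell=\bm{z}_{k,j}$). Where you differ is in packaging and in the verification of the inequality. The paper establishes two global containments (Propositions~\ref{claim:Position of C} and \ref{claim:DDDDDD}: $C\subseteq\bigcup S_i$, $D\subseteq(\bigcup S_i^\circ)^c$, each meeting the boundaries only at $P_1$, resp.\ $P_2$) and reads off disjointness from $P_1\cap P_2=\emptyset$; you instead prove $C_i\cap D_j=\emptyset$ pairwise with a case split on which block contains $\bm{z}_{i,j}$, which is a perfectly sound equivalent route and uses the convex-position fact in the same way. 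For the inequality itself, the paper gives a soft Cauchy--Schwarz/triangle estimate valid for \emph{arbitrary} positions $\bm{v}_k\in I_k$, $\bm{v}_\ell\in I_\ell$, while your direct trigonometric expansion implicitly normalizes $\bm{z}_{k,j}$ and $\bm{z}_{i,j}$ to share the same angular parameter $t_j$; the paper's construction does not impose this. This is not a real gap (you may freely make that normalization, or carry two parameters $t_k,t_i$ through the expansion and absorb the resulting $R(1-\cos(t_k-t_i))$ and $R\sin(t_k-t_i)$ cross terms, both $O(M^{-1})$, into the slack the threshold~\eqref{eq:choiceofM} already provides), but it should be made explicit so the computation is airtight. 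Both arguments buy the same bound; the paper's soft estimate is more robust to the unspecified arc positions, while your explicit collapse $R[(\cos\theta-1)(M\cos t_j-R)+M\sin t_j\sin\theta]$ is arguably more transparent about why the threshold $M>3/\sin^2(\pi/s)+1$ is precisely what is needed.
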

\begin{proof}[Proof of Lemma~\ref{lemma:KeyInConstruction}]

It suffices to show the following two propositions.
\begin{prop}\label{claim:Position of C}
    \(C\subseteq\bigcup_{i=1}^{s}S_{i}\) and \(C\cap (\bigcup_{i=1}^{s}\partial S_{i})\subseteq P_{1}. \)
\end{prop}
\begin{proof}[Proof of Proposition~\ref{claim:Position of C}]
   For each $i$, all generators of $C_i$ lie on the circular minor arc $I_i\subset \partial S_i$, hence
\[
C_i \subseteq \operatorname{Conv}(P_1\cap I_i) \subseteq \operatorname{Conv}(I_i).
\]
The convex hull $\operatorname{Conv}(I_i)$ is precisely the circular segment cut off by the chord joining the endpoints of $I_i$, and is contained in the closed disk $S_i$. Thus $C_i\subseteq S_i$ for all $i$, and consequently $C\subseteq \bigcup_{i=1}^s S_i$. Second, we show that for each \(i\in [s]\), \(C_{i}\cap \partial S_{i}\subseteq C_{i}\cap P_{1}\). Assume not, there exists some \(i\) and some point \(\bm{v}\in C_{i}\cap \partial S_{i}\) but \(\bm{v}\notin C_{i}\cap P_{1}\), then for this \(i\), \(\bm{v}\) together with all points of \(C_{i}\cap P_{1}\) are in convex position. Hence, \(\bm{v}\) does not belong to the convex hull of all points in \(C_{i}\cap P_{1}\). However, the later convex hull is \(C_{i}\) itself, which is a contradiction.
\end{proof}

\begin{prop}\label{claim:DDDDDD}
    \(D\subseteq\big({\bigcup_{i=1}^{s}{S_{i}^{\circ}}}\big)^{c}\) and \(D\cap (\bigcup_{i=1}^{s}\partial S_{i})= P_{2}. \)
\end{prop}
\begin{proof}[Proof of Proposition~\ref{claim:DDDDDD}]
       Our first goal is to show the following claim based on our selection of circles.
       \begin{claim}\label{claim:Negative}
           Let \(\ell,k\in [s]\) be distinct integers. For any points \(\bm{v}_{k}\in I_k\) and \(\bm{v}_{\ell}\in I_\ell\), we have \((\bm{v}_{k}-\bm{y}_{k})\cdot (\bm{v}_{k}-\bm{v}_{\ell})<0\).
       \end{claim}

\begin{poc}
We will use the following estimation.
% For any $\bm{v}\in I_k$,
% \begin{equation}\label{equation:aa
%     \|\bm{v}-\bm{u}_k\|\leq \delta R<\frac 1M.
% \end{equation}
For any $i\in [s]$ and any $\bm{v}_i\in I_i$,
\begin{equation}\label{equation:aa}
    \|\bm{v}_i-\bm{u}_i\|\leq \delta R<\frac {1}{M}.
\end{equation}
Furthermore, for any $1\leq k\neq \ell\leq s$, setting $m=\min\{|\ell-k|,s-|\ell-k|\}$, by the choices of points in~\cref{subsection:Selection}, we have 
$$\angle (\bm{u}_k-\bm{y}_k,\bm{u}_k-\bm{u}_{\ell})=\frac{\pi}{2}+\frac{m\pi}{s}\in\bigg[\frac{s+2}{2s}\pi,\pi\bigg],$$
and
$$\angle(\bm{u}_k,\bm{u_{\ell}})=\frac{2m\pi}{s}\in\bigg[\frac{2\pi}{s},\pi\bigg].$$
Therefore, we have
\begin{equation}\label{equation:Cos}
    \cos\angle (\bm{u}_k-\bm{y}_k,\bm{u}_k-\bm{u}_{\ell})\leq\cos\frac{(s+2)\pi}{2s}
\end{equation}
and 
\begin{equation}\label{equation:norm1}
    \|\bm{u}_k-\bm{u}_{\ell}\|=2\sin{\frac{\angle(\bm{u}_k,\bm{u_{\ell}})}{2}}\in\bigg[2\sin{\frac{\pi}{s}},2\bigg].
\end{equation}
Thus we have
\begin{align*}
(\bm{v}_{k}-\bm{y}_{k})\cdot(\bm{v}_{k}-\bm{v}_{\ell})
&= \bigl[(\bm{v}_{k}-\bm{u}_{k})+(\bm{u}_{k}-\bm{y}_{k})\bigr]
   \cdot \bigl[(\bm{v}_{k}-\bm{u}_{k})+(\bm{u}_{k}-\bm{u}_{\ell})+(\bm{u}_{\ell}-\bm{v}_{\ell})\bigr] \\
&= (\bm{u}_{k}-\bm{y}_{k})\cdot(\bm{u}_{k}-\bm{u}_{\ell})
 + \bigl[(\bm{v}_{k}-\bm{u}_{k})+(\bm{u}_{k}-\bm{y}_{k})\bigr]
   \cdot \bigl[(\bm{v}_{k}-\bm{u}_{k})+(\bm{u}_{\ell}-\bm{v}_{\ell})\bigr] \\
&\quad + (\bm{u}_{k}-\bm{u}_{\ell})\cdot(\bm{v}_{k}-\bm{u}_{k}) \\
&\le (\bm{u}_{k}-\bm{y}_{k})\cdot(\bm{u}_{k}-\bm{u}_{\ell})
 + \bigl(\|\bm{v}_{k}-\bm{u}_{k}\|+\|\bm{u}_{k}-\bm{y}_{k}\|\bigr)
   \bigl(\|\bm{v}_{k}-\bm{u}_{k}\|+\|\bm{u}_{\ell}-\bm{v}_{\ell}\|\bigr) \\
&\quad + \|\bm{u}_{k}-\bm{u}_{\ell}\|\,\|\bm{v}_{k}-\bm{u}_{k}\| \\
&\le R\,\|\bm{u}_{k}-\bm{u}_{\ell}\|\,
   \cos\angle\!\bigl(\bm{u}_{k}-\bm{y}_{k},\,\bm{u}_{k}-\bm{u}_{\ell}\bigr)
 + \Bigl(M+\frac{1}{M}\Bigr)\Bigl(\frac{1}{M}+\frac{1}{M}\Bigr)
 + \frac{2}{M} \\
&\le 2(M-1)\sin{\bigg(\frac{\pi}{s}\bigg)}\cdot \cos\bigg(\frac{(s+2)\pi}{2s}\bigg) + 6 \\
&< 0,
\end{align*}
where the first inequality follows since \(\bm{a}\cdot\bm{b}\le\|\bm{a}\|\cdot\|\bm{b}\|\), the second one comes from \eqref{equation:aa} and~\eqref{equation:norm1}, the third one is due to~\eqref{eq:choiceofR},~\eqref{equation:Cos},~\eqref{equation:norm1} and \(\cos\big(\frac{(s+2)\pi}{2s}\big)<0\), and the final one follows from~\eqref{eq:choiceofM}.
\end{poc}

% \begin{figure}[ht]
% \begin{center}
%     \begin{tikzpicture}
%  \draw [thick,dashed] (4,6.9282) arc(180:240:8);
%  \draw [thick,dashed] (-4,6.9282) arc(240:300:8);
 
%  \coordinate (y_1) at (12,6.93);
%  \filldraw[black] (y_1) circle (1pt) node [above right]
% {$y_1$};
%  \coordinate (u_1) at (5.07,2.93);
%  \filldraw[black] (u_1) circle (1pt) node [above]
% {$u_1$};
%  \coordinate (v_1) at (5.216,2.689);
%  \filldraw[black] (v_1) circle (1pt) node [ below left]
% {$v_1$};
%  \coordinate (u_2) at (0,5.86);
%  \filldraw[black] (u_2) circle (1pt) node [above]
% {$u_2$};
%  \coordinate (O) at (0,0);
%  \filldraw[black] (O) circle (1pt) node [below left]
% {$O$};
% \coordinate (v_2) at (0.836,5.900);
%  \filldraw[black] (v_2) circle (1pt) node [above]
% {$v_2$};
% \draw[dashed,thick]({u_1})--({v_1});
% \draw[dashed,thick]({u_2})--({v_2});
% \draw[dashed,thick]({u_2})--({u_1})--({y_1});
% \draw[dashed,thick]({u_1})--({O})--({u_2});
% \draw[thick]({v_2})--({v_1})--({y_1});
% \draw[red, thick] (0.433,0.25) arc (30:90:0.5) node[midway, right]{};
% \node[red,draw=none, fill=none] at (0.4,0.75){$\frac{\pi}{3}$};
% \draw[blue, thick] (5.72,3.305) arc (30:150:0.75) node[midway, right]{};
% \node[blue,draw=none, fill=none] at (5.07,4.05){$\frac{2\pi}{3}$};

% \end{tikzpicture}
% \end{center}
% \caption{The positions of $\bm{y}_k,\bm{u}_k,\bm{u}_l,\bm{v}_k,\bm{v}_l$ for the case $s=6, ~k=1, ~\ell=2$. This figure illustrates that $\angle(\bm{y}_k-\bm{v}_k,\bm{v}_k-\bm{v}_l)>\frac{\pi}{2}$}
% \end{figure}

       \begin{claim}\label{claim:Disjointttt}
           For each \(j,k\in [s]\), and for any point \(\bm{v}\in D_{j}\), we have \((\bm{z}_{k,j}-\bm{y}_{k})\cdot (\bm{z}_{k,j}-\bm{v})\le 0\). Moreover, the equality holds if and only if \(\bm{v}\) is exactly \(\bm{z}_{k,j}\).
       \end{claim}
       \begin{poc}
          Recall the definition
\[
D_j=\operatorname{Conv}\{ \boldsymbol{z}_{i,j}\in P_2:\ i\in[s] \}.
\]
Hence for each \(\bm{v}\in D_{j}\), there exist indices $I\subseteq[s]$ and coefficients $\alpha_i\ge 0$ with $\sum_{i\in I}\alpha_i=1$ such that
\[
\boldsymbol{v}=\sum_{i\in I}\alpha_i\,\boldsymbol{z}_{i,j}.
\]
Using linearity of the dot product,
\[
(\boldsymbol{z}_{k,j}-\boldsymbol{y}_k)\cdot(\boldsymbol{z}_{k,j}-\boldsymbol{v})
=\sum_{i\in I}\alpha_i\,(\boldsymbol{z}_{k,j}-\boldsymbol{y}_k)\cdot(\boldsymbol{z}_{k,j}-\boldsymbol{z}_{i,j}).
\]
For $i=k$ the summand is $0$. For $i\neq k$ we invoke Claim~\ref{claim:Negative} with
$\boldsymbol{v}_k=\boldsymbol{z}_{k,j}\in I_k$ and $\boldsymbol{v}_\ell=\boldsymbol{z}_{i,j}\in I_i$ to get
\[
(\boldsymbol{z}_{k,j}-\boldsymbol{y}_k)\cdot(\boldsymbol{z}_{k,j}-\boldsymbol{z}_{i,j})<0.
\]
Therefore, every term in the sum is non-positive and any term with $i\neq k$ is strictly negative.
It follows that
\[
(\boldsymbol{z}_{k,j}-\boldsymbol{y}_k)\cdot(\boldsymbol{z}_{k,j}-\boldsymbol{v})\le 0,
\]
with equality if and only if $\alpha_i=0$ for all $i\neq k$ and $\alpha_k=1$, which also implies that
$\boldsymbol{v}=\boldsymbol{z}_{k,j}$. This finishes the proof.
       \end{poc}
    Fix $j,k\in[s]$. For the disk $S_k=\{\boldsymbol{w}:\|\boldsymbol{w}-\boldsymbol{y}_k\|\le R\}$, the tangent line at $\boldsymbol{z}_{k,j}\in\partial S_k$ has outward normal $\boldsymbol{z}_{k,j}-\boldsymbol{y}_k$, and $S_k$ is contained in the supporting halfspace
\[
\{ \boldsymbol{w}: (\boldsymbol{z}_{k,j}-\boldsymbol{y}_k)\cdot(\boldsymbol{w}-\boldsymbol{z}_{k,j}) \le 0 \}.
\]
By Claim~\ref{claim:Disjointttt}, for any $\boldsymbol{v}\in D_j$ we have
\[
(\boldsymbol{z}_{k,j}-\boldsymbol{y}_k)\cdot(\boldsymbol{z}_{k,j}-\boldsymbol{v})\le 0
\quad\Longleftrightarrow\quad
(\boldsymbol{z}_{k,j}-\boldsymbol{y}_k)\cdot(\boldsymbol{v}-\boldsymbol{z}_{k,j})\ge 0.
\]
Hence $\boldsymbol{v}$ lies in the closed halfspace opposite to the one containing $S_k$, so $\boldsymbol{v}\notin S_k^\circ$. Since it holds for every $k\in [s]$ and every $j\in [s]$, this yields
\[
D_j\subseteq\Bigl(\,\bigcup_{k=1}^s S_k^\circ\Bigr)^{\!c}
\quad\text{and therefore}\quad
D=\bigcup_{j=1}^s D_j\subseteq\Bigl(\,\bigcup_{k=1}^s S_k^\circ\Bigr)^{\!c}.
\]

Moreover, the equality characterization in Claim~\ref{claim:Disjointttt} shows that, for any $\boldsymbol{v}\in D_j$ and any $k\in [s]$,
\[
(\boldsymbol{z}_{k,j}-\boldsymbol{y}_k)\cdot(\boldsymbol{z}_{k,j}-\boldsymbol{v})=0
\ \Longleftrightarrow\ 
\boldsymbol{v}=\boldsymbol{z}_{k,j}.
\]
Geometrically, the only point of $D_j$ lying on the tangent line at $\boldsymbol{z}_{k,j}$ is the tangent point itself. Hence
\[
D_j\cap\partial S_k=\{\boldsymbol{z}_{k,j}\}\cap D_j,
\]
and since the generators of $D_j$ are precisely the column-$j$ points that belong to $P_2$, we obtain
\[
D\cap\Bigl(\,\bigcup_{k=1}^s\partial S_k\Bigr)
=\bigcup_{j=1}^s\ \bigcup_{k=1}^s\bigl(D_j\cap\partial S_k\bigr)
=\bigcup_{j=1}^s\ \bigcup_{k=1}^s \bigl(\{\boldsymbol{z}_{k,j}\}\cap P_2\bigr)
= P_2.
\]
This finishes the proof of Proposition~\ref{claim:DDDDDD}.
\end{proof}
Propositions~\ref{claim:Position of C} and~\ref{claim:DDDDDD} together yield \(C\cap D=\emptyset,\) finishing the proof of Lemma~\ref{lemma:KeyInConstruction}.
\end{proof}

This finishes the proof of \(f_{2}(2,s,s)>s^{2}\).

\section{High-dimensional torus and \(f_{r}(d,s,\ldots,s)>s^{r}\) for \(d\ge 2r-2\)}\label{section:Generalization High}
We derive a higher-dimensional construction from the planar one in this section and provide the proof of~\cref{thm:highdLB}. By monotonicity of \(f_{r}(d,s,\ldots,s)\) in the dimension parameter, it suffices to show \(f_{r}(2r-2,s,\ldots,s)>s^{r}\). Notice that Tverberg's theorem is equivalent to \(f_{r}(2r-2,1,1,\ldots,1)=(r-1)(2r-1)+1\), we then set $r\ge 2$ and $s\ge 2$.

\paragraph{Selection of \(s^{r}\) many points.}
Let $\{\bm{z}_{i_1,i_2}:i_1,i_2\in[s]\}\subseteq\mathbb{R}^2$ be the planar point set from the proof of
$f_2(2,s,s)> s^2$ (constructed on circular caps, see~\eqref{eq:PPP} in~\cref{section:SelectionP}).
Let 
\[
U=\{\bm{u}_k:=e^{2\pi i k/s}:k\in[s]\}\subseteq\mathbb{R}^2
\]
be the set of vertices of a regular $s$-gon (viewed in $\mathbb{R}^2\cong\mathbb{C}$).
For $(i_1,\dots,i_r)\in[s]^r$ define a point in $\mathbb{R}^{2r-2}$ by
\[
\bm{p}_{(i_1,\ldots,i_r)}:=\bigl(\bm{z}_{i_1,i_2},\bm{u}_{i_3},\bm{u}_{i_4},\ldots,\bm{u}_{i_r}\bigr).
\]
Set
\[
P:=\{\bm{p}_{(i_1,\ldots,i_r)}:(i_1,\ldots,i_r)\in[s]^r\}\subseteq\mathbb{R}^{2r-2}\cong {\mathbb{R}^2}
\times\underbrace{\mathbb{R}^2\times\cdots\times\mathbb{R}^2}_{r-2\ \text{blocks}}:=\mathbb{G}_{0}\times \mathbb{G}_{1}\times\cdots\times\mathbb{G}_{r-2},
\]
where we use \(\mathbb{G}_{i}\) to denote the \(i\)-th block.
Then it is easy to see that \(|P|=s^r\).

\begin{figure}[ht]
\begin{center}
\begin{minipage}{0.2\textwidth}
\centering
\begin{tikzpicture}[scale=0.2]
 \draw [thick,dashed] (4,6.9282) arc(180:240:8);
 \draw [thick,dashed] (-4,6.9282) arc(240:300:8);
 \draw [thick,dashed] (-8,0) arc(300:360:8);
 \draw [thick,dashed] (-4,-6.9282) arc(0:60:8);
 \draw [thick,dashed] (4,-6.9282) arc(60:120:8);
 \draw [thick,dashed] (8,0) arc(120:180:8);

 \coordinate (z_{11}) at (5.872,1.786);
 \coordinate (z_{12}) at (5.528,2.226);
 \coordinate (z_{13}) at (5.216,2.689);
 \coordinate (z_{14}) at (4.936,3.172);
\coordinate (z_{15}) at (4.692,3.674);
 \coordinate (z_{16}) at (4.482,4.192);
 \coordinate (z_{21}) at (1.389,5.969);
\coordinate (z_{22}) at (0.836,5.900);
 \coordinate (z_{23}) at (0.279,5.862);
 \coordinate (z_{24}) at (-0.279,5.862);
 \coordinate (z_{25}) at (-0.836,5.900);
 \coordinate (z_{26}) at (-1.389,5.969);
 \coordinate (z_{31}) at (-4.482,4.192);
\coordinate (z_{32}) at (-4.692,3.674);
 \coordinate (z_{33}) at (-4.936,3.172);
 \coordinate (z_{34}) at (-5.216,2.689);
\coordinate (z_{35}) at (-5.528,2.226);
 \coordinate (z_{36}) at (-5.872,1.786);
 \coordinate (z_{41}) at  (-5.872,-1.786);
 \coordinate (z_{42}) at (-5.528,-2.226);
 \coordinate (z_{43}) at (-5.216,-2.689);
 \coordinate (z_{44}) at (-4.936,-3.172);
\coordinate (z_{45}) at (-4.692,-3.674);
 \coordinate (z_{46}) at (-4.482,-4.192);
 \coordinate (z_{51}) at (-1.389,-5.969);
\coordinate (z_{52}) at (-0.836,-5.900);
 \coordinate (z_{53}) at (-0.279,-5.862);
 \coordinate (z_{54}) at (0.279,-5.862);
 \coordinate (z_{55}) at (0.836,-5.900);
 \coordinate (z_{56}) at (1.389,-5.969);
 \coordinate (z_{61}) at (4.482,-4.192);
\coordinate (z_{62}) at (4.692,-3.674);
 \coordinate (z_{63}) at (4.936,-3.172);
 \coordinate (z_{64}) at (5.216,-2.689);
\coordinate (z_{65}) at (5.528,-2.226);
 \coordinate (z_{66}) at (5.872,-1.786);

\draw [black](z_{11})--(z_{21})--(z_{31})--(z_{41})--(z_{51})--(z_{61})--(z_{11});
\draw [red](z_{12})--(z_{22})--(z_{32})--(z_{42})--(z_{52})--(z_{62})--(z_{12});
\draw [blue](z_{13})--(z_{23})--(z_{33})--(z_{43})--(z_{53})--(z_{63})--(z_{13});
\draw [yellow](z_{14})--(z_{24})--(z_{34})--(z_{44})--(z_{54})--(z_{64})--(z_{14});
\draw [green](z_{15})--(z_{25})--(z_{35})--(z_{45})--(z_{55})--(z_{65})--(z_{15});
\draw [purple](z_{16})--(z_{26})--(z_{36})--(z_{46})--(z_{56})--(z_{66})--(z_{16});
    
\end{tikzpicture}
\end{minipage}
$\times$
\begin{minipage}{0.2\textwidth}
\centering
\begin{tikzpicture}[scale=0.5]
\draw [dashed] (0,0) circle (3);
\coordinate (z_1) at (3,0);
\coordinate (z_2) at (1.5,2.6);
\coordinate (z_3) at (-1.5,2.6);
\coordinate (z_4) at (-3,0);
\coordinate (z_5) at (-1.5,-2.6);
\coordinate (z_6) at (1.5,-2.6);
\draw (z_1)--(z_2)--(z_3)--(z_4)--(z_5)--(z_6)--(z_1);
\end{tikzpicture}
\end{minipage}
$\times$
\begin{minipage}{0.2\textwidth}
\centering
\begin{tikzpicture}[scale=0.5]
\draw [dashed] (0,0) circle (3);
\coordinate (z_1) at (3,0);
\coordinate (z_2) at (1.5,2.6);
\coordinate (z_3) at (-1.5,2.6);
\coordinate (z_4) at (-3,0);
\coordinate (z_5) at (-1.5,-2.6);
\coordinate (z_6) at (1.5,-2.6);
\draw (z_1)--(z_2)--(z_3)--(z_4)--(z_5)--(z_6)--(z_1);
\end{tikzpicture}
\end{minipage}
$\times \cdots \times$
\begin{minipage}{0.2\textwidth}
\centering
\begin{tikzpicture}[scale=0.5]
\draw [dashed] (0,0) circle (3);
\coordinate (z_1) at (3,0);
\coordinate (z_2) at (1.5,2.6);
\coordinate (z_3) at (-1.5,2.6);
\coordinate (z_4) at (-3,0);
\coordinate (z_5) at (-1.5,-2.6);
\coordinate (z_6) at (1.5,-2.6);
\draw (z_1)--(z_2)--(z_3)--(z_4)--(z_5)--(z_6)--(z_1);
\end{tikzpicture}
\end{minipage}
\end{center}
\caption{An illustration of the high-dimensional construction.}
\end{figure}

\paragraph{Adversarial partition and $s$-convex containers.}
Let $P=P_1\sqcup\cdots\sqcup P_r$ be an arbitrary partition.
For each $j\in[r]$ and $k\in[s]$, define the $j$-th \emph{layer} of index $k$ by
\[
G_{j,k}:=\{\bm{p}_{(i_1,\ldots,i_r)}\in P_j: i_j=k\,\}.
\]
Define $C_{j,k}:=\operatorname{Conv}(G_{j,k})$ and $C_j:=\bigcup_{k=1}^s C_{j,k}$.
By definition, $P_j\subseteq C_j$ and each $C_j$ is an $s$-convex set.

We then prove the following claim, which immediately implies $f_r(2r-2,s,\ldots,s)> s^r$.

\begin{claim}
    For all $(k_1,\ldots,k_r)\in[s]^{r}$, we have
\[
C_{1,k_1}\ \cap\ C_{2,k_2}\ \cap\ \cdots\ \cap\ C_{r,k_r}\ =\ \emptyset.
\]
\end{claim}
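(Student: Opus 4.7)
The plan is to argue by contradiction. Suppose some point $\bm{q} \in \bigcap_{j=1}^r C_{j,k_j}$ exists, and decompose $\bm{q} = (\bm{q}_0, \bm{q}_1, \ldots, \bm{q}_{r-2})$ according to the product $\mathbb{G}_0 \times \mathbb{G}_1 \times \cdots \times \mathbb{G}_{r-2}$. The goal is to derive structural constraints on $\bm{q}$ strong enough to reduce the situation to the planar disjointness already proved in Lemma~\ref{lemma:KeyInConstruction}.

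The first step is to pin down the high-dimensional coordinates of $\bm{q}$. For each $j \ge 3$, every generator of $C_{j,k_j}$ lies in $G_{j,k_j}$, hence has $\mathbb{G}_{j-2}$-coordinate exactly $\bm{u}_{k_j}$. Since this coordinate is constant across all generators, it is preserved under convex combinations, forcing $\bm{q}_{j-2} = \bm{u}_{k_j}$ for every $j \in \{3, \ldots, r\}$.

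Next I propagate these pinned values back to the representations of $\bm{q}$ through $C_{1, k_1}$ and $C_{2, k_2}$. For $j \in \{1, 2\}$, write $\bm{q} = \sum_t \alpha_t \bm{p}_t$ as a convex combination of generators $\bm{p}_t \in G_{j, k_j}$, and project to $\mathbb{G}_\ell$ for each $\ell \in \{1, \ldots, r-2\}$. This yields $\bm{u}_{k_{\ell+2}} = \sum_t \alpha_t \bm{u}_{(i_{\ell+2})_t}$, a convex combination of points of $\mathbb{S}^1$ whose value again lies on $\mathbb{S}^1$. By Fact~\ref{fact:COnvexPosition}(1) (equivalently, strict convexity of the unit disk), every generator $\bm{p}_t$ with $\alpha_t > 0$ must satisfy $(i_{\ell+2})_t = k_{\ell+2}$. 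Combining these constraints over all $\ell \ge 1$ with the defining index $i_j = k_j$ of $G_{j, k_j}$, the only generators of $C_{1, k_1}$ (respectively $C_{2, k_2}$) contributing to $\bm{q}$ are of the form $\bm{p}_{(k_1, i_2, k_3, \ldots, k_r)}$ (respectively $\bm{p}_{(i_1, k_2, k_3, \ldots, k_r)}$).

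Finally I reduce to the planar statement. Define
\[
T_j := \bigl\{(i_1, i_2) \in [s]^2 : \bm{p}_{(i_1, i_2, k_3, \ldots, k_r)} \in P_j\bigr\}
\]
for each $j \in [r]$; these are pairwise disjoint subsets of $[s]^2$ inherited from the partition $P = P_1 \sqcup \cdots \sqcup P_r$. The previous step shows that the planar projection $\bm{q}_0$ lies in both $\operatorname{Conv}\{\bm{z}_{k_1, i_2} : (k_1, i_2) \in T_1\}$ and $\operatorname{Conv}\{\bm{z}_{i_1, k_2} : (i_1, k_2) \in T_2\}$. Since $T_1$ and $T_2$ are disjoint, I invoke Lemma~\ref{lemma:KeyInConstruction} -- whose proof uses only that the two families are disjoint, not that they cover all of $P$ -- to conclude that these two planar hulls are disjoint, yielding the desired contradiction. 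The main obstacle is the middle propagation step: extracting from the bare containment $\bm{q} \in C_{j, k_j}$ the rigidity that every generator with positive weight must match $\bm{q}$ in every torus coordinate. Once this extremal-point argument on the $(\mathbb{S}^1)^{r-2}$ factors is secured, the problem genuinely collapses to the already-resolved planar case.
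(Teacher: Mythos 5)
Your proof is correct and follows essentially the same strategy as the paper: pin down the torus coordinates of the hypothetical intersection point via the $j \ge 3$ constraints, then use the convex-position (extreme-point) property of the $s$-gon on $\mathbb{S}^1$ to force every positively-weighted generator of $C_{1,k_1}$ and $C_{2,k_2}$ to sit in the same torus layer, and finally reduce to the planar disjointness of Lemma~\ref{lemma:KeyInConstruction}. The only cosmetic difference is that you apply the strict-convexity/convex-position argument one $\mathbb{S}^1$ factor at a time, while the paper packages it into a single extreme-point statement about $U^{r-2} \subseteq \mathbb{T}^{r-2}$ (Fact~\ref{fact:COnvexPosition}(2)--(3) and Claim~\ref{claim:Points In X}); both routes yield identical conclusions, and your observation that Lemma~\ref{lemma:KeyInConstruction} only requires the two planar index sets to be disjoint (after completing to a bipartition, which can only enlarge the column hulls) is the same implicit step the paper uses for its slices $A$ and $B$.
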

\begin{poc}
    Recall the ambient identification
\[
\mathbb{G}:=\mathbb{R}^{2r-2}\ \cong\  {\mathbb{R}^2}
\times\underbrace{\mathbb{R}^2\times\cdots\times\mathbb{R}^2}_{r-2\ \text{blocks}}:=\mathbb{G}_{0}\times \mathbb{G}_{1}\times\cdots\times\mathbb{G}_{r-2},
\]
where we use \(\mathbb{G}_{i}\) to denote the \(i\)-th block.
Let
\[
\pi^{(1)}: \mathbb{G}\rightarrow \mathbb{G}_{0}\ \textup{such\ that}\ \pi^{(1)}(\bm{z}_{i_{1},i_{2}},\bm{u}_{i_{3}},\ldots,\bm{u}_{i_{r}})=\bm{z}_{i_{1},i_{2}}.
\]
Moreover, let
\[
\pi^{(>1)}:\mathbb{G}\rightarrow\mathbb{G}_{1}\times\cdots\times\mathbb{G}_{r-2}\ \textup{such\ that}\ \pi^{(>1)}(\bm{z}_{i_{1},i_{2}},\bm{u}_{i_{3}},\ldots,\bm{u}_{i_{r}})=(\bm{u}_{i_{3}},\ldots,\bm{u}_{i_{r}}).
\]

Suppose that there exists some $(k_1,\dots,k_r)\in[s]^{r}$ such that
\[
\boldsymbol{v}\ \in\ C_{1,k_1}\cap C_{2,k_2}\cap\cdots\cap C_{r,k_r}.
\]
Write $\boldsymbol{v}=(\boldsymbol{v}^{(1)},\boldsymbol{v}^{(>1)})$ with
$\boldsymbol{v}^{(1)}=\pi^{(1)}(\boldsymbol{v})$ and
$\boldsymbol{v}^{(>1)}=\pi^{(>1)}(\boldsymbol{v})$.

For $j\ge 3$, by the definition of \(G_{j,k}\), we can see 
\[
G_{j,k}\subseteq \mathbb{G}_{0}\times\mathbb{G}_{1}\times\cdots\times\mathbb{G}_{j-3}\times\{\bm{u}_{k}\}\times\mathbb{G}_{j-1}\times\cdots\times\mathbb{G}_{r-2}.
\]
Since $\boldsymbol{v}\in C_{j,k_j}=\operatorname{Conv}(G_{j,k_j})$, the convexity implies that 
\[
\bm{v}\in\bigcap_{j=3}^{r}(\mathbb{G}_{0}\times\mathbb{G}_{1}\times\cdots\times\mathbb{G}_{j-3}\times\{\bm{u}_{k_{j}}\}\times\mathbb{G}_{j-1}\times\cdots\times\mathbb{G}_{r-2}).
\]
Therefore, we have
\begin{equation}\label{qe:v>1}
    \boldsymbol{v}^{(>1)}=(\boldsymbol{u}_{k_3},\boldsymbol{u}_{k_4},\dots,\boldsymbol{u}_{k_r}).
\end{equation}

Recall that $U:=\{\boldsymbol{u}_1,\ldots,\boldsymbol{u}_s\}\subseteq\mathbb{R}^2$ forms a regular \(s\)-gon. By Fact~\ref{fact:COnvexPosition}(2)-(3), $U\times \cdots \times U$ is in convex position and
\[
\textup{Ext}(U\times\cdots\times U)=U\times\cdots\times U.
\] Since $\boldsymbol{v}^{(>1)}\in U\times\cdots\times U$, it is an extreme point of $U\times\cdots\times U$.

Furthermore, since $\boldsymbol{v}\in C_{1,k_1}=\operatorname{Conv}(G_{1,k_1})$, we can select a minimal finite set
$X\subseteq G_{1,k_1}$ with $\boldsymbol{v}\in\operatorname{Conv}(X)$. The following claim is the key step.

\begin{claim}\label{claim:Points In X}
 For every \(\bm{x}\in X\), \(\pi^{(>1)}(\boldsymbol{x})=\boldsymbol{v}^{(>1)}\).
\end{claim}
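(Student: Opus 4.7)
The strategy is to combine the minimality of $X$ with the extreme-point property of $\bm{v}^{(>1)}$ that has already been extracted from \eqref{qe:v>1} and Fact~\ref{fact:COnvexPosition}. Since $X$ is a minimal finite subset of $G_{1,k_1}$ with $\bm{v} \in \operatorname{Conv}(X)$, I would first write $\bm{v} = \sum_{\bm{x} \in X} \alpha_{\bm{x}}\, \bm{x}$ as a \emph{strict} convex combination, that is, with $\alpha_{\bm{x}} > 0$ for every $\bm{x} \in X$ and $\sum_{\bm{x} \in X} \alpha_{\bm{x}} = 1$. If some coefficient were zero, one could delete the corresponding point from $X$ while preserving $\bm{v} \in \operatorname{Conv}(X)$, violating minimality.

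Next, I would apply the coordinate projection $\pi^{(>1)}$ to both sides. Since $\pi^{(>1)}$ is linear, and each $\bm{x} \in G_{1,k_1}$ has the form $(\bm{z}_{k_1,i_2}, \bm{u}_{i_3}, \ldots, \bm{u}_{i_r})$ by construction of $P$, the image $\pi^{(>1)}(\bm{x})$ lies in $U \times \cdots \times U$. Linearity gives
$$ \bm{v}^{(>1)} \;=\; \sum_{\bm{x} \in X} \alpha_{\bm{x}}\, \pi^{(>1)}(\bm{x}), \qquad \pi^{(>1)}(\bm{x}) \in U \times \cdots \times U, $$
so $\bm{v}^{(>1)}$ is exhibited as a strict convex combination of finitely many points of $U \times \cdots \times U$.

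The final step is to invoke extremality. By \eqref{qe:v>1} together with Fact~\ref{fact:COnvexPosition}(2)-(3), every element of $U \times \cdots \times U$, and in particular $\bm{v}^{(>1)}$, is an extreme point, meaning $\bm{v}^{(>1)} \notin \operatorname{Conv}\bigl((U\times\cdots\times U)\setminus\{\bm{v}^{(>1)}\}\bigr)$. If some $\pi^{(>1)}(\bm{x}_0) \neq \bm{v}^{(>1)}$, then isolating that term and renormalizing the remaining positive-weight terms (using $\alpha_{\bm{x}_0}\in(0,1)$) would display $\bm{v}^{(>1)}$ as lying on a nontrivial segment between $\pi^{(>1)}(\bm{x}_0) \in U \times \cdots \times U$ and another point of $\operatorname{Conv}(U\times\cdots\times U)$, hence in $\operatorname{Conv}\bigl((U\times\cdots\times U)\setminus\{\bm{v}^{(>1)}\}\bigr)$, a contradiction. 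Therefore $\pi^{(>1)}(\bm{x}) = \bm{v}^{(>1)}$ for every $\bm{x} \in X$.

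I do not expect a substantive obstacle: the argument is a clean application of convex analysis, provided minimality is genuinely used to force strict positivity of the convex coefficients. The only bookkeeping point is to verify that the paper's finite-set definition of extreme point (Fact~\ref{fact:COnvexPosition}(3)) rules out nontrivial convex decompositions in the required form, which it does precisely because $U \times \cdots \times U$ is the full vertex set of its own convex hull.
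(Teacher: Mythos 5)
Your argument follows the paper's in all essentials: minimality of $X$ forces strictly positive convex coefficients, linearity of $\pi^{(>1)}$ transports the combination to the last $r-2$ blocks, and the extreme-point property of $\bm{v}^{(>1)}$ in $U\times\cdots\times U$ from Fact~\ref{fact:COnvexPosition}(2)--(3) supplies the contradiction. The one place you are looser than the paper is the final ``hence.'' After isolating a single offending term $\bm{x}_0$ and renormalizing the remaining terms into a point $\bm{w}$, you only know $\bm{w}\in\operatorname{Conv}(U\times\cdots\times U)$; the combination defining $\bm{w}$ may still carry positive weight on $\bm{v}^{(>1)}$ itself, so the membership $\bm{v}^{(>1)}\in\operatorname{Conv}\bigl((U\times\cdots\times U)\setminus\{\bm{v}^{(>1)}\}\bigr)$ does not follow immediately from the displayed segment. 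The paper instead groups \emph{all} indices $i$ with $\pi^{(>1)}(\bm{x}_i)=\bm{v}^{(>1)}$, moves their total weight to the left-hand side, and renormalizes over the offending indices only, directly exhibiting $\bm{v}^{(>1)}$ as a convex combination of points of $U\times\cdots\times U$ that are all distinct from $\bm{v}^{(>1)}$, which is precisely what extremality forbids. Your plan is repaired by the same grouping, or alternatively by invoking the standard fact that a point of a finite set admitting a nontrivial convex decomposition over that set's hull cannot be a vertex of the hull and hence lies in the hull of the remaining points; as written, though, the last inference skips this small but genuine piece of bookkeeping.
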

\begin{poc}
Let \(X:=\{\bm{x}_{1},\bm{x}_{2},\ldots,\bm{x}_{m}\}\). Since \(\bm{v}\in\textup{Conv}(X)\), we can write \(\bm{v}\) as 
\begin{equation}\label{repv}
\bm{v}=\sum\limits_{i=1}^{m}\alpha_{i}\bm{x}_{i},  
\end{equation}
where \(\alpha_{i}>0\) due to the minimality of \(X\). By the definition of \(\pi^{(>1)}\) and~\eqref{repv} we have
\[
\bm{v}^{(>1)}=\sum\limits_{i=1}^{m}\alpha_{i}\cdot\pi^{(>1)}(\bm{x}_{i}).
\]
Suppose the claim is false, without loss of generality and by a suitable relabeling, there exists some \(q\in [m]\) such that \(\pi^{(>1)}(\bm{x}_{i})\neq \bm{v}^{(>1)}\) for any \(i\le q\) and \(\pi^{(>1)}(\bm{x}_{i})= \bm{v}^{(>1)}\) otherwise. By the fact that $\sum\limits_{i=1}^m \alpha_i=1$, we have
\[
\sum_{i\le q}\alpha_{i}\bm{v}^{(>1)}=\sum\limits_{i\le q}\alpha_{i}\pi^{(>1)}(\bm{x}_{i}),
\]
which implies that
\[
\bm{v}^{(>1)}=\sum\limits_{i\le q}\bigg(\frac{\alpha_{i}}{\sum_{i\le q}\alpha_{i}}\bigg)\pi^{(>1)}(\bm{x}_{i}).
\]
However, this contradicts that \(\bm{v}^{(>1)}\) is an extreme point of $U\times\cdots\times U$. This finishes the proof.
\end{poc}

Then by Claim~\ref{claim:Points In X} and since \(X\subseteq G_{1,k_{1}}\), every $\boldsymbol{x}\in X$ has the form
\[
\boldsymbol{x}=\bigl(\boldsymbol{z}_{k_1,i_2},\ \boldsymbol{v}^{(>1)}\bigr)
\quad\text{for some }i_2\in[s],
\]
where $\boldsymbol{z}_{k_1,i_2}\in\mathbb{G}_{0}$ comes from the planar construction.
By definition, we have
\begin{equation}\label{eq:row-hull}
\boldsymbol{v}^{(1)}\in \operatorname{Conv}\bigl(\{\boldsymbol{z}_{k_1,i_2}:\ (\boldsymbol{z}_{k_1,i_2},\boldsymbol{v}^{(>1)})\in P_1\}\bigr).
\end{equation}
An entirely analogous argument for $\boldsymbol{v}\in C_{2,k_2}$ provides a minimal finite set
$Y\subseteq G_{2,k_2}$ with $\boldsymbol{v}\in\operatorname{Conv}(Y)$ and
$\pi^{(>1)}(\boldsymbol{y})=\boldsymbol{v}^{(>1)}$ for all $\boldsymbol{y}\in Y$, whence
\begin{equation}\label{eq:col-hull}
\boldsymbol{v}^{(1)}\in \operatorname{Conv}\bigl(\{\boldsymbol{z}_{i_1,k_2}:\ (\boldsymbol{z}_{i_1,k_2},\boldsymbol{v}^{(>1)})\in P_2\}\bigr).
\end{equation}

Define the row and column slices at the layer $\boldsymbol{v}^{(>1)}$ respectively by
\[
A:=\{\boldsymbol{z}_{k_1,i_2}:\ (\boldsymbol{z}_{k_1,i_2},\boldsymbol{v}^{(>1)})\in P_1\},\qquad
B:=\{\boldsymbol{z}_{i_1,k_2}:\ (\boldsymbol{z}_{i_1,k_2},\boldsymbol{v}^{(>1)})\in P_2\}.
\]
Then \eqref{eq:row-hull}–\eqref{eq:col-hull} state that
\[
\boldsymbol{v}^{(1)}\in \operatorname{Conv}(A) \cap \operatorname{Conv}(B)\ \subseteq \mathbb{G}_{0}.
\]
However, $A$ lies in the $k_1$-th row and $B$ lies in the $k_2$-th column of the planar point set,
selected according to the induced partition on the fixed higher layer $\boldsymbol{v}^{(>1)}$.
By the planar separation property established in Lemma~\ref{lemma:KeyInConstruction} (the union of row-hulls from one part and the union of column-hulls from the other part are disjoint), \(\textup{Conv}(A)\cap \textup{Conv}(B)=\emptyset\),
which is a contradiction. This finishes the proof.
\end{poc}

\section{The power of disjointness and almost linear bound for \(F_{2}(2,s,s)\)}
In this section, we provide an upper bound for \(F_{2}(2,s,s)\). For convenience, we recall Definition~\ref{def:Fr}.
\begin{defn-non}\label{def:disjointness}
For integers $d\ge1$, $r\ge2$, and $s_1,\ldots,s_r\ge1$, let $F_r(d,s_1,\ldots,s_r)$ be the least integer $n$ such that every $n$-point set $P\subseteq\mathbb{R}^d$ admits a partition
$P=P_1\sqcup\cdots\sqcup P_r$ with the following property: If for each \(i\in [r]\), \(C_{i}\) is a union of \(s_{i}\) convex sets \(C_{i,j}\) (\(j\in [s_i]\)) such that $C_{i,j}\cap C_{i,j'}=\emptyset$ for all $j\ne j'$ (that is, \(C_{i}=\bigsqcup_{j=1}^{s_{i}}C_{i,j}\)) and \(P_{i}\subseteq C_{i}\), then $\bigcap_{i=1}^{r} C_i\neq\emptyset$ must hold.
\end{defn-non}

To show \(F_{r}(d,s,\ldots,s)\le F\), it suffices to show that for any point set \(P\subseteq \mathbb{R}^{d}\) of size at least \(F\), there exists some partition \(P=\bigsqcup_{i=1}^{r}P_{i}\) such that for any family of $s_i$-convex sets $C_{i}=\bigsqcup_{j=1}^{s_{i}}C_{i,j}$ containing \(P_{i}\) with \(i\in [r]\), we have \(\bigcap_{i=1}^{r}C_{i}\neq\emptyset.\)

\subsection{Tools and auxiliary results}\label{sec:TOOLs}
In this part, we will take advantage of various results in the fields of extremal combinatorics and discrete geometry. 
The following lemma was shown in~\cite[Lemma~2.2]{2025arxivAlonSmo}, and we also refer the interested readers to the great book~\cite{2002BookMatousek}.
\begin{lemma}[\cite{2025arxivAlonSmo}]\label{lem:AS-VC}
Fix integers \(d\ge 1\) and \(\ell\ge 1\).
Let \(\mathcal{R}_{d,\ell}\) be the family of subsets of \(\mathbb{R}^d\) of the form
\[
R_{d,\ell} = \bigcup_{i=1}^{m} P_i,
\]
where each \(P_i\) is a convex polyhedron in \(\mathbb{R}^d\) and the \emph{total} number of facets of \(P_1,\dots,P_m\) is at most \(\ell\).
Then the range space \(\bigl(\mathbb{R}^d,\mathcal{R}_{d,\ell}\bigr)\) has VC-dimension at most \(c d \ell \log \ell\) for some absolute constant \(c>0\).
Equivalently, there is a universal constant \(c>0\) such that no point set in \(\mathbb{R}^d\) of size larger than \(c d \ell \log \ell\) can be shattered by \(\mathcal{R}_{d,\ell}\).
\end{lemma}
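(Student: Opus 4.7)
The plan is to reduce the VC-dimension bound to a Sauer–Shelah–style trace count. Every $R\in\mathcal{R}_{d,\ell}$ can be presented as $R=\bigcup_{i=1}^{m}P_{i}$ with each $P_{i}$ the intersection of some closed halfspaces and with a total of at most $\ell$ halfspaces across all $P_{i}$. Consequently $R$ is encoded by (i) a list of at most $\ell$ closed halfspaces in $\mathbb{R}^{d}$, together with (ii) a set partition of the indices $[\ell]$ whose blocks specify which halfspaces are intersected to form each $P_{i}$, and whose $P_{i}$'s are then unioned. A trace $R\cap A$ on a fixed $n$-point set $A\subseteq\mathbb{R}^{d}$ is therefore determined by the traces of the individual halfspaces on $A$ together with this partition.

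Next I would bound the two pieces separately. Since the class of closed halfspaces in $\mathbb{R}^{d}$ has VC-dimension $d+1$ (Radon's theorem), Sauer–Shelah gives at most $\binom{n}{\le d+1}\le (en)^{d+1}$ distinct subsets of $A$ cut out by any single halfspace, so an $\ell$-tuple of halfspaces yields at most $(en)^{(d+1)\ell}$ distinct trace tuples; the set partition of $[\ell]$ contributes a further factor of at most $\ell^{\ell}$. Combining these,
\[
\bigl|\{R\cap A : R\in\mathcal{R}_{d,\ell}\}\bigr|\ \le\ (en)^{(d+1)\ell}\cdot\ell^{\ell}.
\]
If $A$ is shattered, then $2^{n}$ is at most this right-hand side, and taking logarithms produces the implicit inequality $n\le (d+1)\ell\log(en)+\ell\log\ell$. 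A standard asymptotic inversion of this inequality yields $n=O(d\ell\log\ell)$ provided $\ell$ is above an absolute constant; the remaining small-$\ell$ cases are trivial.

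The main obstacle I anticipate is the clean bookkeeping around the set-partition enumeration: an ordered tuple of halfspaces is not the same as an unordered polyhedral decomposition, so one must verify that the $\ell^{\ell}$ factor genuinely captures all Boolean combinations compatible with the facet-count constraint without over- or undercounting, and that the final inversion of $n\le O(d\ell)\log n+\ell\log\ell$ collapses to $O(d\ell\log\ell)$ rather than the slightly weaker $O(d\ell\log(d\ell))$. A more elegant packaging would invoke a general closure meta-result, namely that arbitrary $k$-fold Boolean combinations of a range space of VC-dimension $v$ form a range space of VC-dimension $O(kv\log k)$, applied to the halfspace class with $v=d+1$ and $k=\ell$; this absorbs the partition bookkeeping into a known lemma and immediately yields the claimed $O(d\ell\log\ell)$ bound.
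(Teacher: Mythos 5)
The paper cites this lemma from Alon--Smorodinsky without reproving it, so there is no in-paper proof to compare against; I'm assessing your argument on its own. Your encoding of each range as an $\ell$-tuple of halfspaces together with a partition of $[\ell]$ into intersection blocks is the right reduction, and the $\ell^{\ell}$ partition factor contributes only $\ell\log\ell$ to the final log, which is harmless.

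The genuine gap is in the Sauer--Shelah step. You bound the number of halfspace traces on an $n$-point set by $(en)^{d+1}$, so the shattering inequality reads $2^{n}\le (en)^{(d+1)\ell}\cdot\ell^{\ell}$ and, after taking logarithms, $n\le (d+1)\ell\log(en)+\ell\log\ell$. This inverts to $n=O\bigl(d\ell\log(d\ell)\bigr)$, not $O(d\ell\log\ell)$; the two genuinely differ when $d$ is much larger than $\ell$, which is precisely the regime the stated bound is sharpest about. The fix is to keep the denominator in the Sauer--Shelah estimate: use $\binom{n}{\le d+1}\le\bigl(en/(d+1)\bigr)^{d+1}$. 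Setting $v=d+1$ and $y=n/v$, the shattering inequality becomes $y\le \ell\log_2(ey)+(\ell/v)\log_2\ell$, a relation in $y$ and $\ell$ alone; it inverts to $y=O(\ell\log\ell)$, whence $n=vy=O(d\ell\log\ell)$ as claimed. Your fallback on the Blumer--Ehrenfeucht--Haussler--Warmuth closure lemma is exactly where this sharper $\log\ell$ (as opposed to $\log(d\ell)$) dependence lives in the literature, and is the cleaner reference to make; but note that their statement is for single-level $k$-fold unions or intersections, whereas your range class is a union-of-intersections, a depth-two formula. Applying the lemma twice would pick up an extra logarithmic factor, so either cite a version of the closure lemma that accommodates a general bounded-complexity Boolean template, or carry out the direct count with the sharp Sauer--Shelah bound as above.
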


One of the main contributions in~\cite{2025arxivAlonSmo} is the following extension of VC-dimension and near-optimal theoretical bounds.
\begin{defn}[$r$-shattered set~\cite{2025arxivAlonSmo}]
Let $H=(V,E)$ be a fixed hypergraph. A subset $S\subseteq V$ is said to be \emph{$r$-shattered} by $E$ if for any partition of $S$ into $r$ pairwise disjoint sets $S_i$ (that is, $S=\bigsqcup_{i=1}^{r} S_i$) there exist hyperedges $e_1,\ldots,e_r\in E$ such that $S_i\subseteq e_i$ for all $i\in[r]$ and
\(
S \cap \big(\bigcap_{i=1}^{r} e_i\big) = \emptyset.
\)
\end{defn}

\begin{lemma}[\cite{2025arxivAlonSmo}]\label{lemma:RSS}
There exists an absolute constant $c$ such that for every integer $d$, any hypergraph $H=(V,E)$ with VC-dimension $d$, and every integer $r\ge 2$, every $r$-shattered set by $E$ has size at most $cdr^{2}\log r$. This bound is nearly optimal: for every $d$ and $r$ there is a hypergraph with VC-dimension $d$ that admits an $r$-shattered set of size $\Omega(dr^{2})$.
\end{lemma}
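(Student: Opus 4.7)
The plan is to combine the Sauer--Shelah lemma with a double-counting scheme tailored to the definition of \(r\)-shattering. Let \(S\subseteq V\) be an \(r\)-shattered set with \(|S|=n\), and write \(\mathcal{T}:=\{e\cap S:e\in E\}\) for the trace family. Since \(H\) has VC-dimension \(d\), the Sauer--Shelah lemma gives \(|\mathcal{T}|\le\binom{n}{\le d}\le (en/d)^{d}\). For every ordered partition \(\sigma=(S_{1},\ldots,S_{r})\) of \(S\), the \(r\)-shattering property produces edges \(e_{1}^{\sigma},\ldots,e_{r}^{\sigma}\); taking traces \(T_{i}^{\sigma}:=e_{i}^{\sigma}\cap S\) yields a \emph{witness trace-tuple} \((T_{1}^{\sigma},\ldots,T_{r}^{\sigma})\in\mathcal{T}^{r}\) with \(S_{i}\subseteq T_{i}^{\sigma}\) for every \(i\) and \(\bigcap_{i=1}^{r}T_{i}^{\sigma}=\emptyset\).

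Next I would double count pairs \((\sigma,(T_{1},\ldots,T_{r}))\) in which the right coordinate witnesses the left. On one hand, there are \(r^{n}\) partitions \(\sigma\), each admitting at least one witness. On the other hand, fix a trace-tuple \((T_{1},\ldots,T_{r})\) with \(\bigcap_{i=1}^{r} T_{i}=\emptyset\); the partitions it witnesses are exactly those obtained by assigning each \(v\in S\) to some index \(i\) satisfying \(v\in T_{i}\), and the number of such assignments is \(\prod_{v\in S}|\{i:v\in T_{i}\}|\le(r-1)^{n}\) because \(\bigcap_{i=1}^{r} T_{i}=\emptyset\) forces every \(v\) to be absent from at least one \(T_{i}\). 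Summing over at most \(|\mathcal{T}|^{r}\) admissible trace-tuples yields the central inequality
\[
r^{n}\;\le\;|\mathcal{T}|^{r}\cdot(r-1)^{n}\;\le\;\Big(\tfrac{en}{d}\Big)^{dr}(r-1)^{n}.
\]

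Rearranging and taking logarithms gives \(n\log\!\bigl(1+\tfrac{1}{r-1}\bigr)\le dr\log(en/d)\). Since \(\log\!\bigl(1+\tfrac{1}{r-1}\bigr)\ge\tfrac{1}{2r}\) for \(r\ge 2\), this becomes \(n\le 2dr^{2}\log(en/d)\); a short bootstrap---substitute \(n=C\,dr^{2}\log r\) so that \(\log(en/d)=O(\log r)\), and choose \(C\) large enough to close the loop---delivers \(n\le c\,dr^{2}\log r\). For the claimed \(\Omega(dr^{2})\) lower bound, the natural route is a product construction: first build a VC-dimension-\(1\) gadget on \(\Theta(r^{2})\) points whose entire ground set is \(r\)-shattered, then take a disjoint union of \(d\) independent copies whose edges are obtained by selecting one gadget-edge per copy, yielding a hypergraph with VC-dimension at most \(d\) and an \(r\)-shattered set of size \(\Omega(dr^{2})\).

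The main obstacle is the witness-count bound \(\prod_{v}|\{i:v\in T_{i}\}|\le (r-1)^{n}\); it depends critically on the clause \(S\cap\bigcap_{i}e_{i}=\emptyset\) in the definition of \(r\)-shattering, and dropping this clause would only allow the base \(r\), after which the central inequality becomes trivial. Once this bound is in hand, Sauer--Shelah plus the logarithmic bootstrap is routine. A secondary difficulty lies in constructing the lower-bound gadget, since naive VC-dimension-\(1\) families (half-lines, singletons, prefix sets) are not \(r\)-shatterable beyond very small sizes; a careful probabilistic or algebraic construction seems necessary to simultaneously control the VC-dimension at one and exhibit the rich intersection pattern that \(r\)-shattering demands.
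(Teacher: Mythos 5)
The paper cites this lemma from Alon–Smorodinsky and does not reprove it, so there is no in-paper argument to compare against; I'll assess the proposal on its own.

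Your upper-bound argument is correct and is essentially the approach one expects from the cited source: restrict to traces on the $r$-shattered set $S$, invoke Sauer--Shelah to get $|\mathcal{T}|\le(en/d)^d$, and double-count (partition, witness trace-tuple) pairs. The key calculation $\prod_{v\in S}|\{i:v\in T_i\}|\le(r-1)^n$ is right, since $\bigcap_i T_i=\emptyset$ forces each $v\in S$ out of at least one $T_i$; together with $r^n\le|\mathcal{T}|^r(r-1)^n$ and the estimate $\log\bigl(1+\tfrac{1}{r-1}\bigr)\ge\tfrac{1}{2r}$ for $r\ge 2$ this yields $n\le 2dr^2\log(en/d)$, and the standard bootstrap delivers $n=O(dr^2\log r)$. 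This part is complete.

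The lower bound, however, is only a plan, not a proof. You propose a ``VC-dimension-1 gadget on $\Theta(r^2)$ points whose entire ground set is $r$-shattered'' but never exhibit one, and you yourself flag that standard dimension-1 families (half-lines, prefixes, singletons) do not work: with prefix sets $[1,k]$, say, every witnessing intersection $\bigcap_i e_i$ necessarily contains the minimum of $S$, so $r$-shattering fails immediately for any partition with all parts nonempty. This is a genuine gap, not a routine detail. The tension you identify---keep the VC-dimension at $1$ per copy while still being able to satisfy, for \emph{every} $r$-partition, both the containment $S_i\subseteq e_i$ and the emptiness $S\cap\bigcap_i e_i=\emptyset$---is exactly where the construction's work lies, and the proposal as written does not resolve it. Also be slightly careful in the product step: one should check not only that the VC-dimension of the $d$-fold disjoint union with componentwise edges is at most $d$ (which follows because any shattered set meets each block in at most one point), but also that $r$-shattering lifts to the disjoint union; the latter is easy once a gadget exists, but it should be stated. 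In short: the upper bound is a valid proof; the lower bound needs an actual construction before the claim is established.
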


\subsection{Planar line-separation: Proof of~\cref{thm:VariantUBPlanar}}
We first introduce the following definition. 
\begin{defn}[Separating systems]
   Let \(s\ge 1\) be an integer. Fix pairwise disjoint, nonempty compact convex sets \(D_1,\dots,D_s\subseteq\mathbb{R}^2\).
A \emph{separating system} for \(\mathcal{D}=\{D_{1},\ldots,D_{s}\}\) is a family \(P:=\{P_1,\ldots,P_s\}\), where \(P_{i}\) is a convex polyhedron, such that \(D_i\subseteq P_i\) for each \(i\in[s]\), and \(P_{i}^\circ \bigcap P_{j}^\circ =\emptyset \) for any \(i\neq j\).
\end{defn}
Let \(\mathcal{P}_{\mathcal{D}}\) be the family of all separating systems for \(\mathcal{D}=\{D_{1},\ldots,D_{s}\}\). We remark that for any \(\mathcal{D}\), \(\mathcal{P}_{\mathcal{D}}\) cannot be empty by~\cite[Lemma~2.1]{2025arxivAlonSmo}. For \(P=\{P_i\}_{i=1}^s\in\mathcal{P}_{\mathcal{D}}\), define the total number of facets
\[
\mathrm{Fac}(P)\;:=\;\sum_{i=1}^s \#\{\text{facets\ of\ }P_i\}.
\]
We then define the key parameter as follows.

\begin{defn}
   For an integer \(a\ge 1\), let \(g(a)\) be the smallest integer with the following property: For every family \(\mathcal{D}=\{D_{1},\ldots,D_{a}\}\subseteq \mathbb{R}^{2}\) of pairwise disjoint convex sets, there exists a separating system \(P\) for \(\mathcal{D}\) with \(\textup{Fac}(P)\le g(a).\)
\end{defn}

It was shown in~\cite{1990DM} that \(g(a)\) grows linearly in \(a\).

\begin{theorem}[Theorem 2~\cite{1990DM}]\label{thm:gs}
For every integer \(a\ge 3\), \(g(a)\le 6a-9\).
\end{theorem}

We then establish the following relation between \(F_{2}(2,s,s)\) and the above function.

\begin{prop}\label{prop:Relation}
There exists an absolute constant \(c>0\) such that for all integers \(s\ge 1\),
\[
F_{2}(2,s,s) \le cg(2s)\log({g(2s)}).
\]
\end{prop}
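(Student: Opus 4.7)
The overall plan is to couple the definition of $g(2s)$ with the VC-dimension bound of~\cref{lem:AS-VC}, producing a contradiction via shattering if $|P|$ is too large. Assume, toward a contradiction, that $|P|>c\cdot g(2s)\log g(2s)$ for a sufficiently large absolute constant $c$, yet no bipartition $P=P_1\sqcup P_2$ satisfies the property of~\cref{def:Fr}.

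In that case, for every bipartition $P=P_1\sqcup P_2$ there exist disjoint-union $s$-convex containers $C_i=\bigsqcup_{j=1}^{s}C_{i,j}$ with $P_i\subseteq C_i$ and $C_1\cap C_2=\emptyset$; the family $\{C_{i,j}\}_{i\in[2],\,j\in[s]}$ then consists of $2s$ pairwise disjoint convex sets in $\mathbb{R}^2$. I would invoke the definition of $g$ to extract a separating system of polyhedra $Q_{i,j}\supseteq C_{i,j}$ with pairwise disjoint interiors and total facet count at most $g(2s)$, and then set $R:=\bigcup_{j=1}^{s}Q_{1,j}$, which lies in the range family $\mathcal{R}_{2,g(2s)}$ of~\cref{lem:AS-VC}. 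If one can establish $R\cap P=P_1$, then as the bipartition varies over all subsets of $P$, the family $\mathcal{R}_{2,g(2s)}$ would shatter $P$, contradicting the VC-dimension bound $O(g(2s)\log g(2s))$ provided by~\cref{lem:AS-VC}.

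The main obstacle is that a separating system only guarantees interior-disjointness, so a boundary incidence could place some point of $P_2$ into $R$ and violate $R\cap P=P_1$. I would handle this by a preliminary regularization: replace each $C_{i,j}$ by the compact polytope $\tilde C_{i,j}:=\operatorname{Conv}(C_{i,j}\cap P)$, which preserves the intersection with $P$ and keeps the collection pairwise disjoint; being compact and disjoint, these lie at positive pairwise distance. Then for sufficiently small $\epsilon>0$, the Minkowski enlargements $\tilde C_{i,j}^{\epsilon}:=\{x:d(x,\tilde C_{i,j})\le\epsilon\}$ are still pairwise disjoint convex sets, and applying the definition of $g$ to them produces polyhedra $Q_{i,j}\supseteq \tilde C_{i,j}^{\epsilon}$ whose interiors are pairwise disjoint. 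Since each $\tilde C_{i,j}$ sits at distance $\ge\epsilon$ from the complement of $\tilde C_{i,j}^{\epsilon}$, it lies inside $Q_{i,j}^\circ$; consequently any point of $P\cap \tilde C_{i,j}$ belongs to $Q_{i,j}^\circ$ and therefore cannot lie in any $Q_{i',j'}$ with $(i',j')\neq(i,j)$, as that would force $Q_{i,j}^\circ\cap Q_{i',j'}^\circ\neq\emptyset$ in a neighborhood of the point. This yields $R\cap P=P_1$ exactly and completes the shattering contradiction, giving $F_2(2,s,s)\le c\,g(2s)\log g(2s)$.
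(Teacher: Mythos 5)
Your proposal follows the same high-level route as the paper: build a range family of controlled complexity out of separating systems, invoke the VC-dimension bound of Lemma~\ref{lem:AS-VC}, and derive a contradiction via shattering. What you do \emph{in addition} is worth flagging, because it actually patches a gap that the paper glosses over. The paper writes ``by definition of the separating system, $C_1\subseteq\mathcal{D}_X$ and $\mathcal{D}_X\cap C_2=\emptyset$,'' but the separating system only guarantees that the polyhedra have \emph{pairwise disjoint interiors}, not that they are disjoint. A point of $C_2$ (indeed a point of $P_2$) could sit on the shared boundary of two polyhedra, one of which belongs to $\mathcal{D}_X$, in which case $\mathcal{D}_X\cap P\neq A'$. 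You resolve this cleanly by first shrinking each piece to the compact polytope $\tilde C_{i,j}=\operatorname{Conv}(C_{i,j}\cap P)$, then inflating by a small $\epsilon$ before invoking $g$, which forces $\tilde C_{i,j}$ (hence all points of $P$ in it) into the \emph{open} interior of $Q_{i,j}$. Combined with interior-disjointness of the $Q$'s (and the observation that every $Q_{i,j}$ arising this way is full-dimensional, since it contains a planar $\epsilon$-ball), no point of $P$ can lie in two distinct $Q$'s, and the identity $R\cap P=P_1$ is exact.

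Two small housekeeping points you should make explicit. First, the definition of a separating system in the paper requires the underlying convex sets to be \emph{nonempty and compact}; your passage to $\tilde C_{i,j}$ handles compactness, but if $C_{i,j}\cap P=\emptyset$ you should simply discard that piece (and use the easy monotonicity $g(a')\le g(2s)$ for $a'\le 2s$, obtained by padding with far-away singletons). Second, state the quantifier correctly: you want to show every $P$ of the claimed size admits a good bipartition, so the contradiction hypothesis is that such a $P$ has \emph{no} good bipartition, exactly as you set it up, but the paper instead extracts a bad $P'$ of size $F_2(2,s,s)-1$ directly from the definition of $F_2$ --- both are valid and equivalent up to a bookkeeping shift. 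In summary, your argument is correct, it is the same strategy as the paper, and your $\epsilon$-regularization step makes the crucial ``$R\cap P=P_1$'' claim rigorous where the paper's own wording leaves a small hole.
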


\begin{proof}[Proof of Proposition~\ref{prop:Relation}]
The proof consists of two parts.
Let \(\ell=g(2s)\). Let \(\mathcal{H}=(P,E)\) be a hypergraph, where \(P\) is a set of points and \(S\subseteq P\) forms an edge if and only if there exist some positive integer $t$, and interior-disjoint polyhedron \(K_{1},\ldots,K_{t}\) such that these polyhedrons contain at most $\ell$ facets in total, and that \(S=P\cap \bigg(\bigcup\limits_{i=1}^{t}K_{i}\bigg)\), namely \(S\) can be cut from \(P\) by intersecting it with a separating system \(R=\{K_i\}_{i=1}^t\) with \(\mathrm{Fac}(R)\leq \ell\). By Lemma~\ref{lem:AS-VC}, the VC-dimension of \(\mathcal{H}\) is at most \(c_{\eqref{lem:AS-VC}}\ell\log{\ell}\). Equivalently, no point set of size \(c_{\eqref{lem:AS-VC}}\ell\log{\ell}+1\) can be shattered by \(\mathcal{H}\).

Suppose, for contradiction, that \(F_{2}(2,s,s) > 2c_{\eqref{lem:AS-VC}}\ell\log{\ell}\).
By the definition of \(F_{2}(2,s,s)\), there exists a set \(P'\subseteq\mathbb{R}^2\) of size \(|P'|=F_{2}(2,s,s)-1\ge c_{\eqref{lem:AS-VC}}\ell\log{\ell}+1\) such that for every partition \(P'=A'\sqcup B'\) there are two {disjoint} \(s\)-convex sets \(C_1, C_2\subseteq\mathbb{R}^2\) with
\[
C_1=\bigsqcup_{i=1}^{s} X_i\ \text{(each }X_i\text{ convex)},\qquad
C_2=\bigsqcup_{j=1}^{s} Y_j\ \text{(each }Y_j\text{ convex)},
\]
satisfying \(A'\subseteq C_1\), \(B'\subseteq C_2\), and \(C_1\cap C_2=\emptyset\).

Apply the definition of \(g(2s)\) to the family \(\{X_1,\dots,X_s,Y_1,\dots,Y_s\}\) of at most \(2s\) pairwise disjoint convex sets: there exists a separating system \(\mathcal{D}=\{D_{1},\ldots,D_{2s}\}\) such that 
\(X_{i}\subseteq D_{i}\) for each \(i\in [s]\) and \(Y_{j}\subseteq D_{s+j}\) for each \(j\in [s]\).
Let 
\[
\mathcal{D}_{X}:=\bigcup_{i=1}^{s}D_{i}.
\]
Furthermore, by definition of the separating system, \(C_1\subseteq \mathcal{D}_{X}\) and \(\mathcal{D}_{X}\cap C_2=\emptyset\).
Consequently,
\[
\mathcal{D}_{X}\cap P'=A'\qquad\text{and}\qquad (\mathbb{R}^2\setminus \mathcal{D}_{X})\cap P'=B'.
\]
Since the partition \(P'=A'\sqcup B'\) was arbitrary, this shows that \(P'\) is shattered by \(\mathcal{H}\), contradicting the VC-dimension bound for \(\mathcal{H}\).
Therefore \(|P'|\le 2c_{\eqref{lem:AS-VC}}\ell\log{\ell}\), which yields that
\[
F_{2}(2,s,s)\le 2c_{\eqref{lem:AS-VC}}g(2s)\log({g(2s)}).
\]
Let \(c=2c_{\eqref{lem:AS-VC}}\), then
\[
F_{2}(2,s,s)\le cg(2s)\log({g(2s)}).
\]
This finishes the proof.
\end{proof}

 Then combining Theorem~\ref{thm:gs} with the reduction above immediately yields~\cref{thm:VariantUBPlanar}.

\section{Upper bound for \(F_{r}(d,s,\ldots,s)\) via a variant of Erd\H{o}s box problem }
A central theme in extremal combinatorics is to determine the largest possible size of a set system or a hypergraph that avoids a prescribed forbidden configuration. Formally, for a given integer \(d\ge 2\) and a given hypergraph \(H\), we denote by \(\textup{ex}_{d}(n,H)\) the maximum number of edges of an \(n\)-vertex \(d\)-uniform hypergraph that does not contain \(H\). One of the earliest results in hypergraph Tur\'{a}n problem is due to Erd\H{o}s~\cite{1964Israel}, which focuses on {forbidding} complete \(d\)-partite \(d\)-uniform hypergraph \(K_{s_{1},s_{2},\ldots,s_{d}}^{(d)}\). In particular, when \(s_{1}=s_{2}=\cdots=s_{d}=2\), the problem is widely known as the \emph{Erd\H{o}s box problem}~\cite{2021DA,1999JCTA,2002MRL}.

\begin{theorem}[\cite{1964Israel}]\label{erdosBox}
    Let \(d\ge 2\) and \(r_{1}\le\cdots \le r_{d}\) be positive integers and let \(n\) be a sufficiently large integer. Then
    \begin{equation}\label{equation:ErdosBoxUpperbound}
    \textup{ex}_{d}(n,K_{r_{1},r_{2},\ldots,r_{d}}^{(d)})\le c_{r_{1},r_{2},\ldots,r_{d}}\cdot n^{d-\frac{1}{\prod_{i=1}^{d-1}r_{i}}},
    \end{equation}
    where \(c_{r_{1},r_{2},\ldots,r_{d}}>0\) is a constant depending on \(r_{1},r_{2},\ldots,r_{d}\).
\end{theorem}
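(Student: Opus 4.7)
The plan is to prove Theorem~\ref{erdosBox} by induction on $d\ge 2$, using a K\H{o}v\'ari--S\'os--Tur\'an style double counting augmented by Jensen's inequality applied to the convex function $x\mapsto\binom{x}{r_1}$. The base case $d=2$ is the classical K\H{o}v\'ari--S\'os--Tur\'an theorem, which yields $\ex(n,K_{r_1,r_2})=O(n^{2-1/r_1})$ whenever $r_1\le r_2$.

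For the inductive step, assume the bound holds for all $(d{-}1)$-uniform hypergraphs, and let $H$ be a $d$-uniform hypergraph on $n$ vertices with $e(H) > C\cdot n^{d-1/\prod_{i=1}^{d-1}r_i}$ edges, where $C=C(r_1,\ldots,r_d)$ is to be chosen sufficiently large. For each $r_1$-subset $T\subseteq V(H)$, define the link hypergraph $L_T$ on vertex set $V(H)\setminus T$ with edge set
\[
E(L_T):=\bigl\{\,S\in\tbinom{V(H)\setminus T}{d-1}:S\cup\{t\}\in E(H)\text{ for every }t\in T\,\bigr\}.
\]
The strategic observation is that a copy of $K^{(d-1)}_{r_2,\ldots,r_d}$ inside some $L_T$, together with $T$ itself, assembles a copy of $K^{(d)}_{r_1,\ldots,r_d}$ in $H$, since every $d$-tuple consisting of one vertex from each part is guaranteed to be an edge of $H$ by the very definition of the link.

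Next I would double count. Writing $d(S)$ for the codegree of a $(d{-}1)$-subset $S$ in $H$, one has $\sum_S d(S)=d\,e(H)$, and a pair $(S,T)$ satisfies $S\in E(L_T)$ precisely when $T$ is an $r_1$-subset of $\{v\notin S:S\cup\{v\}\in E(H)\}$. Therefore
\[
\sum_{T\in\binom{V(H)}{r_1}}|E(L_T)|\;=\;\sum_{S\in\binom{V(H)}{d-1}}\binom{d(S)}{r_1}\;\ge\;\binom{n}{d-1}\binom{d\,e(H)/\binom{n}{d-1}}{r_1}
\]
by convexity, which for $n$ large is $\Omega\bigl(e(H)^{r_1}/n^{(d-1)(r_1-1)}\bigr)$. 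Dividing by $\binom{n}{r_1}$ and substituting the assumed lower bound on $e(H)$, pigeonhole produces some $T$ with
\[
|E(L_T)|\;\ge\;c\cdot n^{(d-1)-r_1/\prod_{i=1}^{d-1}r_i}\;=\;c\cdot n^{(d-1)-1/\prod_{i=2}^{d-1}r_i},
\]
which is precisely the threshold required to invoke the induction hypothesis on $L_T$, a $(d{-}1)$-uniform hypergraph with ordered part sizes $r_2\le\cdots\le r_d$. Taking $C$ large enough to absorb the constant from the induction hypothesis yields a $K^{(d-1)}_{r_2,\ldots,r_d}$ inside $L_T$, and hence a $K^{(d)}_{r_1,\ldots,r_d}$ in $H$.

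The bookkeeping of exponents and constants is routine, but there is one genuinely instructive design choice: inducting against the \emph{smallest} part $r_1$ is what causes the multiplicative $r_1$ introduced by double counting to cancel the $r_1$ factor inside $\prod_{i=1}^{d-1}r_i$, yielding exactly the right residual exponent for the induction hypothesis; peeling off the largest part $r_d$ instead would produce a weaker bound. The only mild technical point is that the Jensen estimate $\binom{x}{r_1}=\Omega(x^{r_1})$ is effective only when $x=d\,e(H)/\binom{n}{d-1}$ is at least a constant multiple of $r_1$, which is automatic since the hypothesis on $e(H)$ forces the average codegree to grow with $n$.
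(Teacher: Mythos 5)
The paper does not prove Theorem~\ref{erdosBox}: it is imported wholesale from Erd\H{o}s's 1964 paper, cited as \cite{1964Israel}, and used as a black box in the proof of Proposition~\ref{prop:Counting}. There is therefore no ``paper's own proof'' to compare against.

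That said, your proof is correct and is essentially the classical argument from Erd\H{o}s's original paper (and the one found in standard textbook treatments). The double counting identity $\sum_T |E(L_T)| = \sum_S \binom{d(S)}{r_1}$, the Jensen step, and the exponent arithmetic all check out: with $e(H) > C n^{d - 1/\prod_{i=1}^{d-1} r_i}$ one indeed finds some $T$ with $|E(L_T)| \gtrsim C^{r_1} n^{(d-1) - 1/\prod_{i=2}^{d-1} r_i}$, which is the correct threshold for the induction hypothesis on the $(d-1)$-uniform link with parts $r_2 \le \cdots \le r_d$. Your observation about peeling off the smallest part $r_1$ rather than the largest $r_d$ is also correct and well explained: peeling off $r_d$ would lead to the weaker exponent $1/(r_d \prod_{i=1}^{d-2} r_i) \le 1/\prod_{i=1}^{d-1} r_i$, since $r_d \ge r_{d-1}$. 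The only cosmetic slips are that one should either extend $\binom{x}{r_1}$ to the convex function $\binom{\max(x,r_1-1)}{r_1}$ before invoking Jensen, or restrict the sum to $S$ with $d(S) \ge r_1$ (the remaining terms are zero anyway), and that when $r_1 = \cdots = r_{d-1} = 1$ the average codegree need not grow with $n$, but in that case $\binom{x}{1} = x$ makes the estimate trivial; neither affects the argument.
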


The separating theorem is one of the most fundamental results in the field of discrete geometry. we state it here for completeness.
\begin{theorem}[\cite{2021CombConvex}]\label{thm:SeparatingTheorem}
Assume \(C\) and \(K\) are convex sets in \(\mathbb{R}^{n}\), where \(C\) is compact and \(K\) is closed. Then \(C\cap K=\emptyset\) if and only if there are closed halfspaces \(\mathbb{H}_{1}\) and \(\mathbb{H}_{2}\) such that \(C\subseteq \mathbb{H}_{1}\), \(K\subseteq \mathbb{H}_{2}\), and \(\mathbb{H}_{1}\cap\mathbb{H}_{2}=\emptyset.\) 
Equivalently, \(C\cap K=\emptyset\) if and only if there exists a hyperplane $H$ such that $C$ and $K$ lie in the two connected components of $\mathbb{R}^{n}\backslash H$ respectively.
\end{theorem}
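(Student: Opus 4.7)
The plan is to prove only the nontrivial direction: assuming $C\cap K=\emptyset$, to produce disjoint closed halfspaces $\mathbb{H}_1\supseteq C$ and $\mathbb{H}_2\supseteq K$ (the converse is immediate since $C\cap K\subseteq \mathbb{H}_1\cap \mathbb{H}_2=\emptyset$, and the hyperplane reformulation follows by choosing any hyperplane strictly between the two separating halfspaces). The overall strategy is the classical nearest-point argument, which reduces separating two convex sets to separating the origin from a single closed convex set.

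First, I would introduce the Minkowski difference
\[
M:=C-K=\{\bm{c}-\bm{k}:\bm{c}\in C,\ \bm{k}\in K\},
\]
which is convex as the image of the convex set $C\times K$ under the linear map $(\bm{c},\bm{k})\mapsto \bm{c}-\bm{k}$. The crucial technical lemma is that $M$ is closed: if a sequence $\bm{c}_n-\bm{k}_n\to \bm{z}$, compactness of $C$ gives a subsequence with $\bm{c}_n\to \bm{c}\in C$, whence $\bm{k}_n=\bm{c}_n-(\bm{c}_n-\bm{k}_n)\to \bm{c}-\bm{z}$, which lies in $K$ by closedness. The hypothesis $C\cap K=\emptyset$ is equivalent to $\bm{0}\notin M$.

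Second, I would locate the nearest point of $M$ to the origin. Pick any $\bm{z}_0\in M$ and intersect $M$ with the closed ball $\overline{B}(\bm{0},\|\bm{z}_0\|)$; this is a compact convex set on which $\bm{x}\mapsto \|\bm{x}\|$ attains its minimum at some $\bm{p}\in M$ with $\|\bm{p}\|>0$. The standard convexity trick then shows $\bm{p}$ supports $M$ at itself: for $\bm{z}\in M$ and $t\in(0,1]$, the point $(1-t)\bm{p}+t\bm{z}\in M$, so $\|(1-t)\bm{p}+t\bm{z}\|^2\ge \|\bm{p}\|^2$; dividing the expansion by $t$ and letting $t\to 0^+$ yields $\langle \bm{p},\bm{z}\rangle\ge \|\bm{p}\|^2$ for every $\bm{z}\in M$.

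Finally, I would unfold $\bm{z}=\bm{c}-\bm{k}$ to obtain $\langle \bm{p},\bm{c}\rangle\ge \langle \bm{p},\bm{k}\rangle+\|\bm{p}\|^2$ for all $\bm{c}\in C$, $\bm{k}\in K$. Setting $\alpha:=\inf_{\bm{c}\in C}\langle \bm{p},\bm{c}\rangle$ (attained and finite by compactness of $C$) and $\beta:=\sup_{\bm{k}\in K}\langle \bm{p},\bm{k}\rangle$ (finite because bounded above by $\alpha-\|\bm{p}\|^2$), we obtain $\alpha-\beta\ge \|\bm{p}\|^2>0$, so the halfspaces
\[
\mathbb{H}_1:=\{\bm{x}:\langle \bm{p},\bm{x}\rangle\ge \alpha\},\qquad \mathbb{H}_2:=\{\bm{x}:\langle \bm{p},\bm{x}\rangle\le \beta\}
\]
are closed, disjoint, and contain $C$ and $K$ respectively, while any affine hyperplane $\{\bm{x}:\langle\bm{p},\bm{x}\rangle=\gamma\}$ with $\gamma\in(\beta,\alpha)$ delivers the equivalent hyperplane formulation. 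The main subtle point is the closedness of $M$, where compactness of one of the two sets is essential: without it, two disjoint closed convex sets can fail to be strictly separated (e.g., the graph of $y=1/x$ for $x>0$ and the $x$-axis in $\mathbb{R}^2$), and the entire argument collapses.
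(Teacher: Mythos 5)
Your proof is correct and is the classical nearest-point argument for strict separation, which is precisely the standard textbook proof one would find in the cited reference \cite{2021CombConvex}; the paper itself invokes this theorem as a black box without proof, so there is no in-paper argument to compare against. All the steps check out: the closedness of the Minkowski difference $C-K$ is correctly established using compactness of $C$ (and you rightly flag this as the subtle point where compactness is essential), the variational inequality $\langle\bm{p},\bm{z}\rangle\ge\|\bm{p}\|^2$ is derived properly, and the strict gap $\alpha-\beta\ge\|\bm{p}\|^2>0$ gives both the disjoint-halfspaces and the separating-hyperplane formulations.
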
 

The main goal of this section is to prove a slightly better bound than the one stated in~\cref{thm:GeneralUpperbound} via the above result.

\begin{theorem}\label{thm:GUP}
  Let $r$ and $d$ be two positive integers such that $r\ge d+2$, and let $s$ be a sufficiently large integer relative to $r$ and $d$. Then there exists some constant $c_{r,d}>0$ such that
\[
 F_r(d,s,\ldots,s)
  \le 
 c_{r,d}\cdot \min\bigg\{s^{\bigl(1-\frac{1}{2^{d}(d+1)}\bigr)r+\frac{1}{2^d}}, s^{2d+3-2^{-r}}\bigg\}\cdot \log s.
\]
\end{theorem}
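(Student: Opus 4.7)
The plan is to mirror the strategy of \cref{thm:VariantUBPlanar}: combine the $r$-shattering framework of \cref{lemma:RSS} with a reduction from disjoint $s$-convex sets to polyhedral unions via the separating hyperplane theorem (\cref{thm:SeparatingTheorem}), and then control the number of facets of those polyhedra through an Erd\H{o}s-box type hypergraph Tur\'an estimate based on \cref{erdosBox}.

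First, I would set up the shattering reduction exactly as in \cref{prop:Relation}. Let $\mathcal{H}=(\mathbb{R}^{d},\mathcal{E})$ be the hypergraph whose hyperedges are traces $P\cap C$ on finite point sets $P$, where $C=\bigsqcup_{j=1}^{s}C_{j}$ ranges over all disjoint $s$-convex sets in $\mathbb{R}^{d}$. If $P$ is a point set witnessing $F_{r}(d,s,\ldots,s)>|P|$, then for every partition $P=\bigsqcup_{i=1}^{r}P_{i}$ there exist disjoint $s$-convex containers $C_{i}\supseteq P_{i}$ with $\bigcap_{i}C_{i}=\emptyset$, so the hyperedges $e_{i}:=P\cap C_{i}$ exhibit $P$ as an $r$-shattered set of $\mathcal{E}$. \cref{lemma:RSS} then gives $|P|=O\bigl(\mathrm{VC}(\mathcal{H})\cdot r^{2}\log r\bigr)$, reducing the task to bounding $\mathrm{VC}(\mathcal{H})$.

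To bound $\mathrm{VC}(\mathcal{H})$, fix a shattered set $T\subseteq\mathbb{R}^{d}$ of size $N$, and for each $S\subseteq T$ fix a realizer $C_{S}=\bigsqcup_{j=1}^{s}C_{j}^{S}$. Applying \cref{thm:SeparatingTheorem} to each of the $\binom{s}{2}$ pairs of disjoint pieces, together with separations of each piece from every point of $T\setminus S$, yields a hyperplane arrangement in which each $C_{j}^{S}$ is contained in a specified convex cell whose trace on $T$ equals $C_{j}^{S}\cap T$. The union of these cells is a polyhedral member of the family $\mathcal{R}_{d,\ell}$ of \cref{lem:AS-VC} with trace exactly $S$ on $T$, so $\mathrm{VC}(\mathcal{H})$ is controlled by Lemma \ref{lem:AS-VC} once the effective facet count $\ell$ is bounded. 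The control on $\ell$ is where the Erd\H{o}s box machinery enters: encode the hyperplane arrangement as an auxiliary hypergraph whose vertex classes index candidate separating hyperplanes (grouped either by the coordinate direction they refine, giving a $(d+1)$-uniform structure, or by the block they serve, giving an $r$-uniform structure), and whose edges record $(d+1)$-tuples or $r$-tuples of simultaneous cell incidences realized across different subsets $S\subseteq T$. A complete sub-hypergraph $K^{(d+1)}_{2,\ldots,2}$ in the first encoding would force $2^{d+1}$ points of $T$ to realize every sign pattern along $d+1$ directions—geometrically impossible by elementary convexity—so Theorem \ref{erdosBox} bounds the auxiliary edge count by $c\cdot N^{d+1-1/2^{d}}$, and tracking this through the $r$ blocks gives the second exponent $s^{2d+3-2^{-r}}$. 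The analogous $r$-partite encoding, forbidding $K^{(r)}_{r_{1},\ldots,r_{d}}$, produces the first exponent $\bigl(1-\tfrac{1}{2^{d}(d+1)}\bigr)r+\tfrac{1}{2^{d}}$, and taking the minimum together with the $\log s$ overhead from \cref{lem:AS-VC} yields the stated bound.

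The main obstacle is the Tur\'an step: choosing which incidence tuples to record so that (i) the forbidden complete sub-hypergraph is genuinely ruled out by convex-geometric obstructions rather than merely \emph{expected} to be absent, and (ii) the Kővári–Sós–Turán bound of \cref{erdosBox} translates faithfully into the claimed refined exponents—in particular recovering the delicate $2^{-r}$ and $\tfrac{1}{2^{d}(d+1)}$ savings. Tracking the interaction between the $r$ blocks, each carrying its own disjointness constraints, and the $d$-dimensional geometry seems to require an iterated application of the Kővári–Sós–Turán bound together with \cref{lem:AS-VC}, and this bookkeeping appears to be the most subtle part of the argument.
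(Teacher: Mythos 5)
Your proposal correctly identifies the ingredients — Lemma~\ref{lemma:RSS} for $r$-shattered sets, Lemma~\ref{lem:AS-VC} for VC-dimension of bounded-complexity polyhedral unions, the separation theorem, and the Erd\H{o}s box theorem — but the way you assemble them has a genuine gap at the very first step. You define $\mathcal{H}$ as the range space whose hyperedges are traces $P\cap C$ of \emph{arbitrary} disjoint $s$-convex sets $C$, and propose to run Lemma~\ref{lemma:RSS} on it; but this hypergraph has infinite VC-dimension already for $s=1$ (convex sets shatter any point set in convex position), so the bound $|P|=O(\mathrm{VC}(\mathcal{H})r^2\log r)$ is vacuous. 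Your fallback plan for bounding $\mathrm{VC}(\mathcal{H})$ — separating each piece $C_j^S$ from the pairs and from every point of $T\setminus S$ so that the cell's trace on $T$ equals $C_j^S\cap T$ — requires on the order of $sN$ hyperplanes when $|T|=N$, so Lemma~\ref{lem:AS-VC} gives $N\le O(d\cdot sN\log(sN))$ and produces no upper bound on $N$. The paper sidesteps this by never bounding the VC-dimension of the honest $s$-convex range space: instead it exploits the fact that $r$-shattering only requires \emph{containment} $S_i\subseteq e_i$ (not trace equality), so the witnessing $s$-convex sets $C_i$ can be enlarged to polyhedral unions $K_i\supseteq C_i$ that still satisfy $\bigcap K_i=\emptyset$, and one only needs the $K_i$ to have controlled total facet count.

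The second gap is that your control on $\ell$ via Erd\H{o}s box is aimed at the wrong hypergraph. The paper's Tur\'an step is Proposition~\ref{prop:Counting}: the vertices of the auxiliary $(r-1)$-uniform hypergraph $\mathcal{G}$ are the convex \emph{pieces} $C_{i,j}$ (not hyperplanes), edges are $(r-1)$-tuples of pieces (one per block) with nonempty common intersection, and $K^{(d+1)}_{2,\ldots,2}$-freeness of every $(d+1)$-projection follows directly from the pairwise disjointness via Claim~\ref{claim:auxi} (an inductive argument showing that $d+1$ disjoint pairs of convex sets in $\mathbb{R}^d$ cannot have all $2^{d+1}$ cross-intersections nonempty). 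This bound $q$ on intersecting tuples is then exactly what controls the facet count in Proposition~\ref{prop:ConvextPolyhedrons}: each piece $K_{i,j}$ needs one separating hyperplane per other piece in its block ($s-1$) and one per nonempty intersecting tuple ($\le q$), giving $\ell=O(rs(q+s))$ independently of $N$. Finally, you have the two exponents essentially reversed: the exponent $(1-\tfrac{1}{2^d(d+1)})r+\tfrac{1}{2^d}$ comes from grouping the $r-1$ vertex classes of $\mathcal{G}$ into blocks of size $d+1$ and applying Erd\H{o}s box to each; the exponent $s^{2d+3-2^{-r}}$ is \emph{not} an Erd\H{o}s box application at all but comes from a separate recursive double-counting bound (Claim~\ref{upperboundBox}) on a hypercube-avoidance problem. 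Without Proposition~\ref{prop:Counting}, Proposition~\ref{prop:ConvextPolyhedrons}, and Claim~\ref{upperboundBox}, your outline does not close.
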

\begin{proof}[Proof of~\cref{thm:GUP}]
Recall that to show \(F_{r}(d,s,\ldots,s)\le F\), it suffices to show that for any point set \(P\subseteq\mathbb{R}^{d}\) with \(|P|\ge F\), there exists some partition \(P=\bigsqcup_{i=1}^{r}P_{i}\) such that the following holds: for any families of convex sets 
$\{C_{i,1},\ldots,C_{i,s}\}_{i\in[r]}$ such that $C_{i,j}\cap C_{i,j'}=\emptyset$ for all $j\ne j'$, the unions 
$C_i:=\bigsqcup_{j=1}^{s} C_{i,j}$ satisfy \(P_{i}\subseteq C_{i}\) for each \(i\in [r]\) and $\bigcap_{i=1}^{r} C_i\neq\emptyset$. 

Our strategy is to focus on the following enumerative problem.

\begin{prop}\label{prop:Counting}
Let $r$ and $d$ be positive integers such that $r\ge d+2$ and let $s$ be sufficiently large relative to $r$ and $d$. Let \(C_{1},\ldots,C_{r}\) be \(s\)-convex sets in \(\mathbb{R}^{d}\) with \(\bigcap_{j=1}^{r}C_{j}=\emptyset\), where for each \(i\in [r]\),  $C_{i}=\bigsqcup_{k=1}^{s}C_{i,k}$ with $C_{i,j}\cap C_{i,j'}=\emptyset$ for all $j\ne j'$. Then there exists a large constant \(c_{r,d}'>0\) such that for any \((r-1)\)-subset \(A\subseteq [r]\), the number of \((r-1)\)-tuples \((i_{\ell})_{\ell\in A}\in [s]^{r-1}\) with \(\bigcap_{\ell\in A}C_{\ell,i_{\ell}}\neq\emptyset\) is at most \(q\), where \(q=q(d,s,r):= \min \bigl \{c_{r,d}'\cdot s^{(1-\frac{1}{2^d(d+1)})(r-1)+\frac{d}{2^d(d+1)}}, s^{2d+2-2^{-r}}\bigr \}.\) 
\end{prop}
\begin{proof}[Proof of Proposition~\ref{prop:Counting}]
    By symmetry, without loss of generality, we can assume that \(A=[r-1]\). We first show the following claim.
    \begin{claim}\label{claim:auxi}
        For convex sets \(D_{i,j}\subseteq\mathbb{R}^{d}\) with \(i\in [d+1]\) and \(j\in [2]\) such that \(D_{i,1}\cap D_{i,2}=\emptyset\) for each \(i\in [d+1]\), there exists a \((d+1)\)-tuple \((j_{1},j_{2},\ldots,j_{d+1})\in [2]^{d+1}\) such that \(\bigcap_{i=1}^{d+1}D_{i,j_{i}}=\emptyset\).
    \end{claim}
    \begin{poc}
We prove it by induction on \(d\). The case of \(d=1\) is trivial. Assume that the conclusion holds for all values smaller than \(d+1\). Suppose that for any \((d+1)\)-tuple \((j_{1},j_{2},\ldots,j_{d+1})\in [2]^{d+1}\), we have \(\bigcap_{\ell=1}^{d+1}D_{\ell,j_{\ell}}\neq\emptyset\), where each \(D_{\ell,j_{\ell}}\in \mathbb{R}^d\). Note that for each \(i\in [d+1]\), we can find a hyperplane \(H_{i}\) that separates the disjoint pair of convex sets \(D_{i,1},D_{i,2}\) by~\cref{thm:SeparatingTheorem}.

We then show that for any \((j_{1},j_{2},\ldots,j_{d})\in [2]^{d}\), we have \((\bigcap_{i=1}^{d}D_{i,j_{i}})\cap H_{d+1}\neq\emptyset.\) To see this, we take points \(\bm{u}_{1}\in (\bigcap_{i=1}^{d}D_{i,j_{i}})\cap D_{d+1,1}\) and \(\bm{u}_{2}\in (\bigcap_{i=1}^{d}D_{i,j_{i}})\cap D_{d+1,2}\) respectively. Since \(H_{d+1}\) separates \(D_{d+1,1}\) and \(D_{d+1,2}\), we can see \(\textup{Conv}(\{\bm{u}_{1},\bm{u}_{2}\})\cap H_{d+1}\neq\emptyset\). This implies that 
\begin{equation}\label{equation:Intersection}
\bigg(\bigcap_{i=1}^{d}D_{i,j_{i}}\bigg)\cap H_{d+1}\neq\emptyset.
\end{equation}

Define \(D_{i,j}':=D_{i,j}\cap H_{d+1}\) for each \(i\in[d]\) and \(j\in [2]\). Since $H_{d+1}\cong \mathbb{R}^{d-1}$, and $D_{i,j}'\in H_{d+1}$ for each $D_{i,j}'$, we can apply the inductive hypothesis for the $(d-1)$-dimensional case to deduce that there exist \(j_{1},\ldots,j_{d}\) such that \(\bigcap_{i=1}^{d}D_{i,j_{i}}'=\emptyset,\) which is a contradiction to~\eqref{equation:Intersection}. 
\end{poc}
Now we define an \((r-1)\)-partite \((r-1)\)-uniform hypergraph \(\mathcal{G}=(V,E)\), where \(V=\bigcup_{i=1}^{r-1}V_{i}\), and \(V_{i}=\{C_{i,j}\}_{j\in [s]}\), that is, we regard each convex set \(C_{i,j}\) as a vertex in the hypergraph \(\mathcal{G}\). Furthermore, an \((r-1)\)-tuple \((C_{1,j_{1}},C_{2,j_{2}},\ldots,C_{r-1,j_{r-1}})\) forms an edge if and only if \(\bigcap_{i=1}^{r-1}C_{i,j_{i}}\neq\emptyset\). Then for \(A=[r-1]\), the number of \((r-1)\)-tuples \((i_{\ell})_{\ell\in A}\in [s]^{r-1}\) such that \(\bigcap_{\ell\in A}C_{\ell,i_{\ell}}\neq\emptyset\) is exactly \(|E(\mathcal{G})|\).

Next we focus on the upper bound for \(|E(\mathcal{G})|\). Noting that \(r-1\ge d+1\), for a \((d+1)\)-subset \(T=\{t_{1},t_{2},\ldots,t_{d+1}\}\) of \([r-1]\), we define the \((d+1)\)-partite projection hypergraph \(\mathcal{G}_{T}\) with vertex set \(\bigcup_{i\in T}V_{i}\), and a \((d+1)\)-tuple \(f=(C_{t_{1},j_{t_{1}}},\ldots,C_{t_{d+1},j_{t_{d+1}}})\) forms an edge in \(\mathcal{G}_{T}\) if and only if there exists some edge \(e\in E(\mathcal{G})\) such that \(f\subseteq e\).

By Claim~\ref{claim:auxi}, we have the following corollary.
\begin{claim}\label{claim:K2222}
    For any \((d+1)\)-subset \(T\subseteq [r-1]\), \(\mathcal{G}_{T}\) is \(K_{2,2,\ldots,2}^{(d+1)}\)-free.
\end{claim}
By~\cref{erdosBox}, for any \(T\), we have \(|E(\mathcal{G}_{T})|\le c_{\eqref{erdosBox}}s^{d+1-\frac{1}{2^{d}}}\), where \(c_{\eqref{erdosBox}}\) only depends on \(d\). If we divide \(V\) into \(\ceil{\frac{r-1}{d+1}}\) blocks, then we can immediately obtain the first upper bound of $|E(\mathcal{G})|$ as
\[
O\bigg(s^{\floor{\frac{r-1}{d+1}}\cdot (d+1-\frac{1}{2^{d}})+(r-1)-(d+1)\floor{\frac{r-1}{d+1}}}\bigg)=O\bigg(s^{\frac{r-1}{d+1}\cdot (d+1-\frac{1}{2^{d}})+d\cdot \frac{1}{2^d(d+1)}}\bigg)=O\bigg(s^{\big(1-\frac{1}{2^{d}(d+1)}\big)(r-1)+\frac{d}{2^{d}(d+1)}}\bigg).
\]

Moreover, we can obtain a better control when \(r\) is relatively larger than \(d\) via a slightly different viewpoint.
For any subset \(T=\{t_{1},\ldots,t_{d+1}\}\subseteq [r-1]\) of size \(d+1\) , we define 
\(\psi_{T}: [s]^{r-1}\rightarrow [s]^{d+1}\)  by
\[
\psi_{T}(x_1,x_2,\cdots,x_{r-1})=(x_{t_{1}},x_{t_2},\ldots,x_{t_{d+1}}).
\]
A \emph{$k$-dimensional hypercube} $\mathcal{H}_{k}\subseteq [s]^k$ is defined by
$$\mathcal{H}_{k}=\{(a_{1,i_1},a_{2,i_2},\cdots,a_{k,i_k})\subseteq [s]^{k}: i_j\in [2], j\in[k]\}.$$

We consider the following function \(F(d+1,r-1,s)\), which is defined to be the largest size of a subset \(S\subseteq [s]^{r-1}\) such that for any subset \(T\subseteq [r-1]\) of size \(d+1\), \(\psi_{T}(S)\) does not contain any \((d+1)\)-dimensional hypercube. It is not hard to see \(|E(\mathcal{G})|\le F(d+1,r-1,s)\).

\begin{claim}\label{upperboundBox}
    \(F(d+1,r-1,s)\le s^{2d+2-2^{-r}}\).
\end{claim}
\begin{poc}
    We prove this by induction on \(d+r\). Obviously we have \(F(1,r-1,s)=1\). Moreover, we have \(F(d+1,r-1,s)\le s^{r-1}<s^{2d+2-2^{-r}}\) when \(s\) is large enough and \(d+1=r-1\). Suppose that for any \(d',r'\) with \(d'+r'< d+r\), we have \(F(d'+1,r'-1,s)\le s^{2d'+2-2^{-r'}}.\) Let \(S\subseteq [s]^{r-1}\) be a subset such that \(|S|=F(d+1,r-1,s)\), and for any subset \(T\subseteq [r-1]\) of size \(d+1\), \(\psi_{T}(S)\) does not contain any \((d+1)\)-dimensional hypercube. We then define a mapping \(\gamma:[s]^{r-1}\rightarrow [s]^{r-2}\) by
\[
\gamma(x_{1},\ldots,x_{r-1}):=(x_{2},\ldots,x_{r-1}).
\]
Then the size of \(\gamma(S)\) can be upper bounded by \(F(d+1,r-2,s)\). For distinct $i,j\in [s]$, let 
\[
T_{i,j}:=\{X\in \gamma(S):\{i\}\times X\in S\ \textup{and}\  \{j\}\times X\in S\}. 
\]
Then we have \(|T_{i,j}|\le F(d,r-2,s)\). By double counting, we have
\begin{align*}
\binom{s}{2}\cdot F(d,r-2,s)
&\ge \sum\limits_{1\leq i<j\leq s}|T_{i,j}|\\
&= \sum\limits_{1\leq i<j\leq s}\sum\limits_{X\in T_{i,j}}\mathbbm{1}_{(\{i\}\times X\in S)\wedge (\{j\}\times X\in S)}\\
&= \sum\limits_{X\in\gamma(S)}\binom{|\gamma^{-1}(X)|}{2}\\
&\ge |\gamma(S)|\cdot\binom{|S|/|\gamma(S)|}{2}\\
&\ge \frac{|S|}{2}\cdot\Big(\frac{|S|}{F(d+1,r-2,s)}-1\Big),
\end{align*}
where the second inequality follows from the convexity.
 Therefore, we have
 \[
 \frac{F(d+1,r-1,s)^{2}}{F(d+1,r-2,s)}-F(d+1,r-1,s)-s(s-1)\cdot F(d,r-2,s)\le 0.
 \]
 Then by inductive hypothesis, we have
\begin{align*}
F(d+1,r-1,s)
&\le \frac{ F(d+1,r-2,s)}{2}
  +\sqrt{\frac{F(d+1,r-2,s)^{2}}{4}+s(s-1) F(d+1,r-2,s) F(d,r-2,s)}\\
&\le \frac{1}{2}\cdot s^{2(d+1)-2^{-(r-1)}}+\sqrt{\frac{1}{4}\cdot s^{4(d+1)-2^{-r+2}}+s^{4(d+1)-2^{-r+2}}}\\
&= \frac{1+\sqrt{5}}{2}\cdot s^{2(d+1)-2^{-r+1}}\\
&\le s^{2d+2-2^{-r}}.
\end{align*}
This finishes the proof.
\end{poc}
This completes the proof of Proposition~\ref{prop:Counting}.
\end{proof}

In the following result, let $q=q(d,s,r)$ be the value appearing in Proposition~\ref{prop:Counting}.

\begin{prop}\label{prop:ConvextPolyhedrons}
Let \(C_{1},\ldots,C_{r}\) be \(s\)-convex sets in \(\mathbb{R}^{d}\) with \(\bigcap_{j=1}^{r}C_{j}=\emptyset\), where for each \(i\in [r]\), $C_{i}=\bigcup_{k=1}^{s}C_{i,k}$ with $C_{i,j}\cap C_{i,j'}=\emptyset$ for all $j\ne j'$. Then there exist \(r\) sets \(K_{1},K_{2},\ldots,K_{r}\) where each \(K_{k}\) is the union of \(s\) convex polyhedrons with a total of at most \(rs(q+s)\) facets such that \(C_{i}\subseteq K_{i}\) for each \(i\in [r]\) and \(\bigcap_{i=1}^{r}K_{i}=\emptyset.\)
\end{prop}
\begin{proof}[Proof of Proposition~\ref{prop:ConvextPolyhedrons}]
    Our strategy is for each \(i=1,2,\ldots,r\), we iteratively find the union of \(s\) convex polyhedrons \(K_{i}\) which contains \(C_{i}\) while keeping the cumulative number of facets under tight control.

    First for a convex set \(C_{1,1}\), for each \(2\le j\le s\), since \(C_{1,1}\) is disjoint from \(C_{1,j}\), by~\cref{thm:SeparatingTheorem} we can find a halfspace \(\mathbb{H}_{1,j}\) to separate \(C_{1,1}\subseteq \mathbb{H}_{1,j}\) and \(C_{1,j}\subseteq (\mathbb{H}_{1,j})^{c}\). Furthermore by Proposition~\ref{prop:Counting}, for \(A_{1}=\{2,3,\ldots,r\}\), the number of \((r-1)\)-tuples \((i_{\ell})_{\ell\in [A_{1}]}\in [s]^{r-1}\) such that \(\bigcap_{\ell\in A_{1}}C_{\ell,i_{\ell}}\neq\emptyset\) is at most \(q\). Thus totally we need at most \((s-1)+q\) halfspaces whose intersection results in a convex polyhedron \(K_{1,1}\) such that \(C_{1,1}\subseteq K_{1,1}\), \(K_{1,1}\cap C_{1,j}=\emptyset\) for each \(2\le j\le s\), and \(K_{1,1}\cap (\bigcap_{\ell\in A_{1}}C_{\ell,i_{\ell}})=\emptyset\) for all those {\((r-1)\)-tuples \((i_{\ell})_{\ell\in [A_{1}]}\)} satisfying \(\bigcap_{\ell\in A_{1}}C_{\ell,i_{\ell}}\neq\emptyset\). Moreover, the number of facets of \(K_{1,1}\) is at most \(q+s-1\). We then replace the original \(C_{1,1}\) with \(K_{1,1}\).

    Running the same operation for at most \(rs\) times, we can find the desired \(K_{1},K_{2},\ldots,K_{r}\), where \(K_{i}=\bigcup_{j=1}^{s}K_{i,j}\). Moreover, the total number of facets is at most \(rs(q+s-1)\). This finishes the proof.

\end{proof}

We next derive the main theorem from Proposition~\ref{prop:ConvextPolyhedrons}, drawing inspiration from the proof of~\cite[Theorem~1.6]{2025arxivAlonSmo}.
\paragraph{Proof of~\cref{thm:GUP} via Proposition~\ref{prop:ConvextPolyhedrons}} 
Let \(\ell=rs(q+s)\). Let \(\mathcal{H}=(P,E)\), where \(S\subset P\) is a hyperedge (that is, \(S\in E\)) if and only if there exists a positive integer $m$ and a set of convex polyhedrons \(K_{1},\ldots,K_{m}\) with a total of at most \(\ell\) facets such that \(S=P\cap (\bigcup_{j=1}^{m}K_{j})\). By Lemma~\ref{lem:AS-VC}, the VC-dimension of \(\mathcal{H}\) is at most \(c_{\eqref{lem:AS-VC}}d\ell\log{\ell}\). Furthermore, by Lemma~\ref{lemma:RSS}, the largest size \(n\) of \(r\)-shattered set of \(\mathcal{H}\) is at most
\[
c_{\eqref{lemma:RSS}}dr^{2}(\log{r})\cdot\ell\log{\ell}\le  \min \bigl\{ c_{r,d}''\cdot s^{\bigl(1-\frac{1}{2^{d}(d+1)}\bigr)r+\frac{1}{2^d}}\cdot \log s,\ 
   c_{2}d^{2}r^{3}\log r\cdot s^{2d+3-2^{-r}}\cdot \log s\bigr\}
   \] for some constant \(c_{r,d}''\) depending on \(d,r\) and some absolute constant \(c_{2}>0\).

Then we show \(F_{r}(d,s,\ldots,s)\le n+1\). Let \(P\subseteq \mathbb{R}^{d}\) be a set of size \(n+1\). Then \(P\) cannot be \(r\)-shattered by \(\mathcal{H}\). That means there is some partition \(P=\bigsqcup_{i=1}^{r}P_{i}\) with the following property: there does not exist a collection of \(K_{1},\ldots,K_{r}\) each of which is the union of convex polyhedrons with a total of at most \(rs(s^{2d}+s)\) facets such that \(P_{i}\subseteq K_{i}\) for each \(i\in [r]\) and \(\bigcap_{i=1}^{r}K_{i}=\emptyset\). Now suppose that there are \(C_{1},\ldots,C_{r}\) being \(s\)-convex sets in \(\mathbb{R}^{d}\) with \(\bigcap_{i=1}^{r}C_{i}=\emptyset\) and \(P_{i}\subseteq C_{i}\). Then by Proposition~\ref{prop:ConvextPolyhedrons}, there exist \(r\) sets \(K_{1},K_{2},\ldots,K_{r}\) where each \(K_{k}\) is a union of \(s\) convex polyhedrons with a total of at most \(rs(s^{2d}+s)\) facets such that \(C_{i}\subseteq K_{i}\) for each \(i\in [r]\) and \(\bigcap_{i=1}^{r}K_{i}=\emptyset,\) which is a contradiction. This finishes the proof.
\end{proof}

\section{Concluding remarks}\label{sec:remarks}
In this paper we establish two negative answers to the questions of Alon and Smorodinsky concerning Tverberg-type intersections of unions of convex sets. We prove that for all \(r\ge 2\), \(s\ge 1\), and \(d\ge 2r-2\),
\[
   f_r(d,s,\ldots,s) > s^r,
\]
which matches their general upper bound up to a logarithmic factor. Our construction combines a planar ``ring configuration'' with a gluing scheme on a high-dimensional torus. We further introduce the disjoint-union variant \(F_r(d,s_1,\ldots,s_r)\), revealing that the disjointness drastically alters the quantitative behaviour: in particular, \(F_2(2,s,s)=O(s\log s)\), and for general parameters we obtain upper bounds via a novel connection to hypergraph Tur\'{a}n theory.

As we have mentioned in~\cref{subsection:Contribution}, we can further improve the lower bound as
\[
f_{r}(d,s,\ldots,s)>(d-2r+4)s^{r},
\]
for any \(s\ge 1\), \(r\ge 2\) and \(d\ge 2r-2\). This constitutes a further step toward closing the gap with the upper bound \(O_{r}(ds^{r}\log{s})\) in~\cref{thm:AS-general} obtained by Alon and Smorodinsky when both of \(s\) and \(d\) are large. Here we describe the construction to show \(f_{2}(d,s,s)>ds^{2}\) in details and the general lower bound follows from the same ideas as that in~\cref{section:Generalization High}. 

To achieve this, in $\mathbb{R}^d$, the original selection of $s^2$ points in $\mathbb{R}^2$ can be modified by replacing each circular arc with a spherical surface $\mathbb{S}^{d-1}$ in $\mathbb{R}^d$, and subsequently mapping each point on it to the $d$ vertices of a small $(d-1)$-dimensional regular simplex on this $\mathbb{S}^{d-1}$.

Formally, the construction of the $d \cdot s^2$ point set proceeds as follows.  
Following the approach used for selecting $s^2$ points in the proof of~\cref{thm:quadraticLB}, we work in $\mathbb{R}^d$ and select $s$ spherical surfaces $\mathbb{S}^{d-1}$, denoted $\mathcal{S}_k\cong\mathbb{S}^{d-1}$ for $k \in [s]$, each centered at $\bm{y}'_k = (\bm{y}_k, 0, \dots, 0)\in\mathbb{R}^{d}$ with sufficiently large radius \(R\) (\(R=M-1\) satisfies a similar condition as that in~\eqref{eq:choiceofM}), where \(\bm{y}_{k}\in\mathbb{R}^{2}\) is defined in~\eqref{eq:y_k}.
For each \(i,j\in [s]\), we also denote $\bm{z}'_{i,j}=(\bm{z}_{i,j},0,\dots,0)\in \mathbb{R}^d$, where \(\bm{z}_{i,j}\) is defined in~\eqref{eq:PPP}. Moreover, for each \(i,j\in [s]\), we then define a hyperplane $\mathbb{H}_{i,j}$ intersecting $\mathcal{S}_i$ (not only at \(\bm{z}'_{i,j}\)) such that the vector $\bm{z}'_{i,j} - \bm{y}'_i$ is orthogonal to $\mathbb{H}_{i,j}$, with the distance $\textup{dist}(\bm{z}'_{i,j}, \mathbb{H}_{i,j}) = \delta$, where $\delta$ is chosen sufficiently small relative to $\|\bm{z}'_{i,j}-\bm{z}'_{i,j+1}\|$ for all $i, j \in [s]$.

For each \(i,j\in [s]\), on the intersection $\mathbb{H}_{i,j} \cap \mathcal{S}_i$, we select $d$ points forming a $(d-1)$-dimensional regular simplex, which consists of the points $\bm{z}_{i,j,\ell}$ for $\ell \in [d]$.  
Then we define the set $P$ as  
\[
P := \{ \bm{z}_{i,j,\ell} : i \in [s],\ j \in [s],\ \ell \in [d] \}.
\]
Note that \(|P| = d \cdot s^2\). Now consider an arbitrary bipartition $P = P_1 \sqcup P_2$.  
For each row index $i \in [s]$, define the row convex container  
\[
C_i := \operatorname{Conv} \{ \bm{z}_{i,a,b} \in P_1 : a \in [s],\ b \in [d] \},
\]  
and for each column index $j \in [s]$, define the column convex container  
\[
D_j := \operatorname{Conv} \{ \bm{z}_{a,j,b} \in P_2 : a \in [s],\ b \in [d] \}.
\]  
Finally, define the union sets \(
C := \bigcup_{i=1}^s C_i\) and \(D := \bigcup_{j=1}^s D_j.
\)  
By construction, $C$ and $D$ are $s$-convex sets. Furthermore, it is immediate from the definitions that $P_1 \subseteq C$ and $P_2 \subseteq D$. The proof that $C \cap D = \emptyset$ requires more complicated analysis but follows an argument similar to that in Proposition~\ref{claim:DDDDDD}, here we give a brief sketch of the proof.  
For each \(i,j\in [s]\), define that 
\[
A_i := \operatorname{Conv} \{ \bm{z}_{i,a,b} : a \in [s],\ b \in [d] \},
\]  
\[
B_j := \operatorname{Conv} \{ \bm{z}_{a,j,b} : a \in [s],\ b \in [d] \}.
\]  
We have the following key observations:
\begin{itemize}
    \item The points $\{\bm{z}_{i,a,b}\}$ in $A_i$ and $\{\bm{z}_{a,j,b}\}$ in $B_j$ are in convex position, respectively.
    \item $A_i \cap B_j = \operatorname{Conv} \{ \bm{z}_{i,j,\ell} : \ell \in [d] \}$ since $A_i$ and $B_j$ lie in distinct closed half-spaces defined by $\mathbb{H}_{i,j}$.
\end{itemize}

 Since the set $\{ \bm{z}_{i,j,\ell} : \ell \in [d] \}$ constitutes the vertices of a $(d-1)$-dimensional regular simplex, for any partition $Z_{i,j} = Q_1 \sqcup Q_2$ we have  
\[
\operatorname{Conv}\{ \bm{z}_{i,j,\ell} \in Q_1,\ \ell \in [d] \} \cap \operatorname{Conv}\{ \bm{z}_{i,j,\ell} \in Q_2,\ \ell \in [d] \} = \emptyset.
\]  
Consequently, $C_i \cap D_j = \emptyset$ for all $i, j \in [s]$.

We show an example for interested readers in~\cref{fig:32}. 

\begin{figure}[H]
    \centering
\begin{tikzpicture}[scale=1.05]
 \draw [thick,dashed] (0,7) arc(202.62:247.38:13);
 \draw [thick,dashed] (-7,0) arc(-67.38:-22.62:13);
 \draw [thick,dashed] (0,-7) arc(22.62:67.38:13);
 \draw [thick,dashed] (7,0) arc(112.62:157.38:13);

 \coordinate (z_{1,1,1}) at (3.996,1.756);
 \filldraw[black] (z_{1,1,1}) circle (1pt) node [above right]
{$z_{1,1,1}$};
 \coordinate (z_{1,1,2}) at (3.644,2.041);
 \filldraw[black] (z_{1,1,2}) circle (1pt) node [above right]
{$z_{1,1,2}$};
 \coordinate (z_{1,2,1}) at (3.301,2.339);
 \filldraw[black] (z_{1,2,1}) circle (1pt) node [above right]
{$z_{1,2,1}$};
 \coordinate (z_{1,2,2}) at (2.969,2.649);
 \filldraw[black] (z_{1,2,2}) circle (1pt) node [above right]
{$z_{1,2,2}$};
 \coordinate (z_{1,3,1}) at (2.649,2.969);
 \filldraw[black] (z_{1,3,1}) circle (1pt) node [above right]
{$z_{1,3,1}$};
 \coordinate (z_{1,3,2}) at (2.339,3.301);
 \filldraw[black] (z_{1,3,2}) circle (1pt) node [above right]
{$z_{1,3,2}$};
 \coordinate (z_{1,4,1}) at (2.041,3.644);
 \filldraw[black] (z_{1,4,1}) circle (1pt) node [above right]
{$z_{1,4,1}$};
 \coordinate (z_{1,4,2}) at (1.756,3.996);
 \filldraw[black] (z_{1,4,2}) circle (1pt) node [above right]
{$z_{1,4,2}$};
 \coordinate (z_{2,1,1}) at (-1.756,3.996);
 \filldraw[black] (z_{2,1,1}) circle (1pt) node [above left]
{$z_{2,1,1}$};
 \coordinate (z_{2,1,2}) at (-2.041,3.644);
 \filldraw[black] (z_{2,1,2}) circle (1pt) node [above left]
{$z_{2,1,2}$};
 \coordinate (z_{2,2,1}) at (-2.339,3.301);
 \filldraw[black] (z_{2,2,1}) circle (1pt) node [above left]
{$z_{2,2,1}$};
 \coordinate (z_{2,2,2}) at (-2.649,2.969);
 \filldraw[black] (z_{2,2,2}) circle (1pt) node [above left]
{$z_{2,2,2}$};
 \coordinate (z_{2,3,1}) at (-2.969,2.649);
 \filldraw[black] (z_{2,3,1}) circle (1pt) node [above left]
{$z_{2,3,1}$};
 \coordinate (z_{2,3,2}) at (-3.301,2.339);
 \filldraw[black] (z_{2,3,2}) circle (1pt) node [above left]
{$z_{2,3,2}$};
 \coordinate (z_{2,4,1}) at (-3.644,2.041);
 \filldraw[black] (z_{2,4,1}) circle (1pt) node [above left]
{$z_{2,4,1}$};
 \coordinate (z_{2,4,2}) at (-3.996,1.756);
 \filldraw[black] (z_{2,4,2}) circle (1pt) node [above left]
{$z_{2,4,2}$};
 \coordinate (z_{3,1,1}) at (-3.996,-1.756);
 \filldraw[black] (z_{3,1,1}) circle (1pt) node [below left]
{$z_{3,1,1}$};
 \coordinate (z_{3,1,2}) at (-3.644,-2.041);
 \filldraw[black] (z_{3,1,2}) circle (1pt) node [below left]
{$z_{3,1,2}$};
 \coordinate (z_{3,2,1}) at (-3.301,-2.339);
 \filldraw[black] (z_{3,2,1}) circle (1pt) node [below left]
{$z_{3,2,1}$};
 \coordinate (z_{3,2,2}) at (-2.969,-2.649);
 \filldraw[black] (z_{3,2,2}) circle (1pt) node [below left]
{$z_{3,2,2}$};
 \coordinate (z_{3,3,1}) at (-2.649,-2.969);
 \filldraw[black] (z_{3,3,1}) circle (1pt) node [below left]
{$z_{3,3,1}$};
 \coordinate (z_{3,3,2}) at (-2.339,-3.301);
 \filldraw[black] (z_{3,3,2}) circle (1pt) node [below left]
{$z_{3,3,2}$};
 \coordinate (z_{3,4,1}) at (-2.041,-3.644);
 \filldraw[black] (z_{3,4,1}) circle (1pt) node [below left]
{$z_{3,4,1}$};
 \coordinate (z_{3,4,2}) at (-1.756,-3.996);
 \filldraw[black] (z_{3,4,2}) circle (1pt) node [below left]
{$z_{3,4,2}$};
 \coordinate (z_{4,1,1}) at (1.756,-3.996);
 \filldraw[black] (z_{4,1,1}) circle (1pt) node [below right]
{$z_{4,1,1}$};
 \coordinate (z_{4,1,2}) at (2.041,-3.644);
 \filldraw[black] (z_{4,1,2}) circle (1pt) node [below right]
{$z_{4,1,2}$};
 \coordinate (z_{4,2,1}) at (2.339,-3.301);
 \filldraw[black] (z_{4,2,1}) circle (1pt) node [below right]
{$z_{4,2,1}$};
 \coordinate (z_{4,2,2}) at (2.649,-2.969);
 \filldraw[black] (z_{4,2,2}) circle (1pt) node [below right]
{$z_{4,2,2}$};
 \coordinate (z_{4,3,1}) at (2.969,-2.649);
 \filldraw[black] (z_{4,3,1}) circle (1pt) node [below right]
{$z_{4,3,1}$};
 \coordinate (z_{4,3,2}) at (3.301,-2.339);
 \filldraw[black] (z_{4,3,2}) circle (1pt) node [below right]
{$z_{4,3,2}$};
 \coordinate (z_{4,4,1}) at (3.644,-2.041);
 \filldraw[black] (z_{4,4,1}) circle (1pt) node [below right]
{$z_{4,4,1}$};
 \coordinate (z_{4,4,2}) at (3.996,-1.756);
 \filldraw[black] (z_{4,4,2}) circle (1pt) node [below right]
{$z_{4,4,2}$};
\draw[black,thick](z_{1,1,1})--(z_{1,1,2})--(z_{2,1,1})--(z_{2,1,2})--(z_{3,1,1})--(z_{3,1,2})--(z_{4,1,1})--(z_{4,1,2})--(z_{1,1,1});
\draw[blue,thick](z_{1,2,1})--(z_{1,2,2})--(z_{2,2,1})--(z_{2,2,2})--(z_{3,2,1})--(z_{3,2,2})--(z_{4,2,1})--(z_{4,2,2})--(z_{1,2,1});
\draw[red,thick](z_{1,3,1})--(z_{1,3,2})--(z_{2,3,1})--(z_{2,3,2})--(z_{3,3,1})--(z_{3,3,2})--(z_{4,3,1})--(z_{4,3,2})--(z_{1,3,1});
\draw[green,thick](z_{1,4,1})--(z_{1,4,2})--(z_{2,4,1})--(z_{2,4,2})--(z_{3,4,1})--(z_{3,4,2})--(z_{4,4,1})--(z_{4,4,2})--(z_{1,4,1});

\end{tikzpicture}
\caption{\(f_{2}(2,4,4)>32\)}
\label{fig:32}
\end{figure}

For the original function \(f_r(d,s,\ldots,s)\), many questions remain open. As Kalai emphasized in his blog~\cite{2025KalaiBolg}, it is a very promising direction in discrete geometry, and our results only mark the beginning. Even in the planar symmetric case, it is unclear whether
\[
   f_2(2,s,s)=\Theta(s^2)
\]
holds. The logarithmic factor in the upper bound of Alon and Smorodinsky arises from a VC-dimension estimate, and it is plausible that a proof avoiding this black-box machinery could remove the extra \(\log s\) term. Developing such a direct geometric argument would be highly desirable.

Turning to the disjoint-union model \(F_r(d,s_1,\ldots,s_r)\), we find it equally fascinating: disjointness appears to play an unexpectedly powerful role in shaping the extremal behavior. This is exemplified in Claim~\ref{claim:auxi}, where we leverage the disjointness property to demonstrate that the projection hypergraph is free of \(K_{2,2,\ldots,2}^{(d+1)}\). This key observation bridges our problem to the field of hypergraph Tur\'{a}n theory. Regarding the function \(F(d,r,s)\) in Claim~\ref{upperboundBox}, what we encounter appears to define a new class of Tur\'{a}n-type problems, for which we have not found any existing results providing superior upper bounds, which might be of independent interest.

Following the spirit of Alon and Smorodinsky, we raise an analogous question, though it may well have a negative answer.

\begin{ques}
{Is \(F_r(d,s,\ldots,s)\) polynomially bounded in \(r,d,s\)?}
\end{ques}
\cref{thm:AS-general} and \cref{thm:GeneralUpperbound} show that, if one of \(d\) and \(r\) is fixed, then the polynomial upper bound holds.

\section*{Acknowledgement}
The authors would like to thank Noga Alon, Gil Kalai and Shakhar Smorodinsky for their helpful comments. In particular, they thank Shakhar Smorodinsky for bringing to their attention the result of~\cite{1990DM}, which significantly simplifies the proof of~\cref{thm:VariantUBPlanar}.

As a participant in the ECOPRO Summer Student Research Program, Wenchong Chen acknowledges the organizers, especially Hong Liu, and the members of IBS for their support and hospitality. Zixiang Xu would like to thank Hong Liu and Chong Shangguan for their encouragement over the past few years to pursue topics in combinatorial convexity, in particular, he gratefully acknowledges Chong Shangguan’s gift of the print edition of~\cite{2021CombConvex} in 2023. While working on this project in September, Zixiang Xu thanks Jiangdong Ai, Fankang He, Yulai Ma, Yongtang Shi and Shuaichao Wang for their hospitality at Nankai University; Jie Han for his hospitality at the Beijing Institute of Technology; and Luyi Li, Ke Ye, and Qiang Zhou for their hospitality at the Chinese Academy of Sciences.

\bibliographystyle{abbrv}
\bibliography{Tverberg}

\end{document}